\tikzset{commutative diagrams/.cd}
\newlength\myheight
\theoremstyle{thmstyleone}%
\newtheorem{Def}{Definition}[section]
\newenvironment{definition}{\begin{Def} \rm}{\end{Def}}
\newtheorem{lemma}[Def]{Lemma}
\newtheorem{proposition}[Def]{Proposition}
\newtheorem{corollary}[Def]{Corollary}
\newtheorem{theorem}[Def]{Theorem}
\newtheorem{example}[Def]{Example}
\newtheorem{remark}[Def]{Remark}
\newcommand{\C}{{\mbox{${\mathcal C}$}}}
\DeclareMathOperator{\kernel}{ker}
\DeclareMathOperator{\image}{im}
\newcommand{\setin}[3]{\{#1\in#2\;|\;#3\}}
\newcommand{\downset}{\mathop{\downarrow}\!}
\newcommand{\lin}[1]{\langle #1\rangle}
\newcommand{\conjun}{\mathrel{\wedge}}
\newcommand{\disjun}{\mathrel{\vee}}
\newcommand{\komma}{,\hspace{0.3em}}
\newcommand{\adj}{^\star}
\newcommand{\after}{\mathrel{\circ}}
\newcommand{\Cat}[1]{\ensuremath{\mathbf{#1}}}
\newcommand{\op}{\ensuremath{^{\mathrm{op}}}}
\newcommand{\idmap}[1][]{\ensuremath{\mathrm{id}_{#1}}}
\renewcommand{\Im}{\ensuremath{\mathrm{Im}}}
\newcommand{\coker}{\ensuremath{\mathrm{coker}}}
\newcommand{\powerset}{\mathcal{P}}
\newcommand{\nul}{\ensuremath{\underline{0}}}
\newcommand{\EndoHom}[1]{\mathcal{E}{\kern-.5ex}\textit{n}{\kern-.2ex}\textit{d}{\kern-.2ex}\textit{o}(#1)}
\begin{document}

\title[A dagger kernel category of complete OMLs]{A dagger kernel category of complete orthomodular lattices}


\author[1]{\fnm{Michal} \sur{Botur}}\email{michal.botur@upol.cz}
\equalcont{These authors contributed equally to this work.}

\author*[2]{\fnm{Jan} \sur{Paseka}}\email{paseka@math.muni.cz}
\equalcont{These authors contributed equally to this work.}

\author[2]{\fnm{Richard} \sur{Smolka}}\email{394121@mail.muni.cz}
\equalcont{These authors contributed equally to this work.}

\affil[1]{\orgdiv{Department of Algebra and Geometry}, \orgname{Palack\'y University Olomouc}, \orgaddress{\street{17.\ listopadu 12}, \city{771 46 Olomouc}, 
\country{Czech Republic}}}

\affil*[2]{\orgdiv{Department of Mathematics and Statistics}, \orgname{Masaryk University}, \orgaddress{\street{Kotl\'a\v rsk\'a 2}, \city{611\,37 Brno}, 
\country{Czech Republic}}}



\abstract{Dagger kernel categories, a powerful framework for studying quantum phenomena within category theory, provide a rich mathematical structure that naturally encodes key aspects of quantum logic. This paper focuses on the category $\Cat{SupOMLatLin}$ of complete orthomodular lattices with linear maps. We demonstrate that $\Cat{SupOMLatLin}$ itself forms a dagger kernel category, equipped with additional structure such as dagger biproducts and free objects. A key result establishes that every morphism in $\Cat{SupOMLatLin}$ admits an essentially unique factorization as a zero-epi followed by a dagger monomorphism. This factorization theorem, along with the dagger kernel category structure of $\Cat{SupOMLatLin}$, provides new insights into the interplay between complete orthomodular lattices and the foundational concepts of quantum theory.
}

\keywords{quantum logic, complete orthomodular lattice, categorical logic, dagger kernel category}



\maketitle

\section{Introduction}\label{sec1}


Dagger kernel categories, introduced in~\cite{HeJa}, provide a powerful framework for studying quantum phenomena within the elegant setting of category theory. 

At their core, dagger categories are equipped with a special involution, denoted by $\dagger$, on morphisms. This involution, reminiscent of the adjoint operation in Hilbert spaces, allows for the abstract representation of physical processes and their time-reversals. The concept of kernels, familiar from algebra, is extended to these categories. Kernels, in this context, capture the notion of subobjects or ''subsystems" within a given object. 

The interplay between the dagger structure and the existence of kernels leads to a rich mathematical structure that naturally encodes key aspects of quantum logic. This includes the possibility of representing propositions about quantum systems as subobjects and exploring their logical relationships.
Dagger kernel categories offer a concise and abstract framework for studying various aspects of quantum theory, including quantum computation, quantum information, and the foundations of quantum mechanics. 
This paper extends the investigation of dagger kernel categories, with a particular focus on their connections to complete orthomodular lattices.

We build upon the work of Jacobs \cite{Jac}, who introduced the fundamental category $\Cat{OMLatGal}$ of orthomodular lattices with Galois connections and explored its properties. Our work also draws inspiration from \cite{BLP}, where the category $\Cat{OMLatLin}$ of orthomodular lattices with linear maps was studied. 

It is worth noting that the category $\Cat{OMLatGal}$, which plays a pivotal role in Jacobs' work, was first defined by Crown in~\cite{Crown75}. However, Crown did not conduct a systematic investigation of this category, nor did he explore its significance within the context of categorical quantum logic, particularly in relation to dagger kernel categories.

 The main results of this paper are as follows.

 We introduce a special category, denoted by $\Cat{SupOMLatLin}$, whose objects are complete orthomodular lattices and whose morphisms are linear maps between them. We establish the following key properties of $\Cat{SupOMLatLin}$:

\begin{itemize}
    \item $\Cat{SupOMLatLin}$ is a dagger kernel category equipped with additional structure, including dagger biproducts and free objects.
    \item Every morphism $f\colon X \to Y$ in $\Cat{SupOMLatLin}$ admits an essentially unique factorization 
     as a zero-epi morphism followed by a dagger monomorphism, where the factorization occurs  through the principal ideal $\downarrow f(1)$ of $X$.
\end{itemize}

The paper is organized as follows. Section~\ref{sec2} introduces the fundamental category $\Cat{SupOMLatLin}$ of orthomodular lattices with linear maps between them and investigates some of its properties. Subsequently, Section~\ref{sec:dkccoml} recalls the definition of a dagger kernel category and proves that $\Cat{SupOMLatLin}$ yields a dagger kernel category. In Section~\ref{sec:factor}, we will prove that every morphism in $\Cat{SupOMLatLin}$ admits an essentially unique factorization as a zero-epi followed by a kernel. Section~\ref{sec:biproducts} investigates the existence of arbitrary biproducts and free objects in $\Cat{SupOMLatLin}$. The paper ends with some final remarks in Section~\ref{ConclusionSec}.

This work presumes that readers are already acquainted with the fundamental concepts and results related to dagger categories and orthomodular lattices. For those interested in exploring these topics further, it is recommended to refer to the works by \cite{HeJa,Jac}, and \cite{Kalmbach83}.

\section{A dagger category of complete orthomodular lattices
}\label{sec2}

The following definition lays the foundation for understanding ortholattices, algebraic structures that generalize Boolean algebras. A key feature of an ortholattice is the presence of an orthocomplement operation, which generalizes the notion of negation in Boolean logic. The definition also introduces the concept of orthomodularity, a crucial property that distinguishes orthomodular ortholattices from more general ortholattices. This property has significant implications for the structure and behavior of these lattices, particularly in the context of quantum logic and other applications.

\medskip

\begin{definition}\label{OMLatDef}{ 
A meet semi-lattice $(X,\conjun, 1)$ is called an {\em ortholattice} if it
comes equipped with a function $(-)^{\perp}\colon X \to X$ satisfying:
\begin{itemize}
   \item $x^{\perp\perp} = x$;
   \item $x \leq y$ implies $y^\perp \leq x^\perp$;
   \item $x \conjun x^\perp = 1^\perp$.
\end{itemize}

\noindent One can then define a bottom element as $0 = 1 \conjun
1^{\perp} = 1^\perp$ and join by $x\disjun y = (x^{\perp}\conjun
y^{\perp})^{\perp}$, satisfying $x\disjun x^{\perp} = 1$.

We write $x\perp y$ if and only if $x\leq y^{\perp}$. 

Such an ortholattice is called {\em orthomodular} if it satisfies (one of)
the three equivalent conditions:
\begin{itemize}
\item $x \leq y$ implies $y = x \disjun (x^\perp \conjun y)$;

\item $x \leq y$ implies $x = y \conjun (y^\perp \disjun x)$;

\item $x \leq y$ and $x^{\perp} \conjun y = 0$ implies $x=y$.
\end{itemize}}
\end{definition}

\medskip

In what follows we recall the definition a dagger category, a fundamental structure in the study of categorical quantum mechanics and other areas of theoretical computer science. A dagger category is a category equipped with an involutive functor, known as the dagger, which provides a notion of adjoint or dual for morphisms. This structure allows for the representation and study of various physical and computational phenomena, such as quantum processes and reversible computations.

\medskip

\begin{definition}
\label{DagcatDef}
A {\it dagger} on a category \C\ is a functor ${}^\star \colon \C\op \to \C$ that is involutive and the identity on objects. A category equipped with a dagger is called a {\it dagger category}.

\[
\begin{tikzcd}[row sep=1cm, column sep=.75cm, ampersand replacement=\&]
	A\arrow[r, bend left,"{g^{*}}"]  \&  B	\arrow[r, bend left,"{f^{*}}"] %
	\arrow[l,  "g"]\&C \arrow[l,  "f"]\&%
	A\arrow[rr, bend left,"{f^{*}\circ g^{*}=(g\circ f)^{*}}"]  \& \& C  \arrow[ll,  "g\circ f"]\&%
	A\arrow[rr, bend left,"{1_A=(1_A)^{*}}"]  \& \& A  \arrow[ll,  "1_A"]
\end{tikzcd}
\]

Let \C\ be a dagger category. A morphism $f \colon A \to B$ is called a {\it dagger monomorphism} if $f^{\star} \circ f={\idmap}_A$, and $f$ is called a {\it dagger isomorphism} if 
$f^{\star} \circ f = {\idmap}_A$ and $f \circ f^\star = {\idmap}_B$. A {\it dagger automorphism} is a dagger isomorphism $f \colon A \to A$.
\end{definition}

\medskip 

In a dagger category, limits and colimits are dual concepts. Applying the dagger operation ${}\adj$ to a limit cone results in a colimit cone, and vice versa.

\medskip 

The following example provides a fundamental illustration of a dagger category that finds widespread applications in various fields, including linear algebra, quantum mechanics, and quantum information theory. The category of finite-dimensional vector spaces with linear transformations as morphisms, equipped with the dagger operation, serves as a foundational example for understanding the concepts of dagger monomorphisms, dagger isomorphisms, and other key properties of dagger categories in more abstract settings.

\medskip 

\begin{example} \rm 
Consider a category ${\mathcal C}$ with the following properties:
\begin{itemize}
\item The objects of ${\mathcal C}$ are denoted as ${\mathbf K}^{n}$, where 
${\mathbf K}$ can be either the real numbers ($\mathbb R$) or the complex numbers ($\mathbb C$) and $n$ is a positive integer.
\item The morphisms in ${\mathcal C}$ are represented by matrices of size $m\times n$ over ${\mathbf K}$, where $m$ and $n$ are positive integers.
\item Composition of morphisms is defined by matrix multiplication when the dimensions are compatible.
\item There exists an involution operation ($^*$) defined on morphisms:
 \begin{itemize}
   \item For ${\mathbf K}=\mathbb R$, the involution is transpose ($A^{*}$ = $A^{T}$).
 \item For ${\mathbf K}=\mathbb C$ , the involution is conjugate-transpose ($A^{*}$ = ${\overline A}^{T}$).
\end{itemize}
\end{itemize}
With these properties, the category ${\mathcal C}$  can be classified as a dagger category.
 \end{example}

\medskip

We now present a novel construction that organizes complete orthomodular lattices (as described in \cite{BLP} for orthomodular lattices) into a dagger category with linear maps as morphisms. A linear map between orthomodular lattices is characterized by the existence of an adjoint function satisfying a specific orthogonality condition. This framework provides a foundation for studying the algebraic and categorical properties of these structures.

\medskip

\begin{definition}\label{def:SupOMLatLin}
The category \Cat{SupOMLatLin} has complete orthomodular
lattices as objects.
A morphism $f \colon X\rightarrow Y$ in \Cat{SupOMLatLin} is a 
function $f \colon X\rightarrow Y$ between the underlying sets such that 
there is a function $h \colon Y \to X$ and, 
for any $x \in X$ and $y \in Y$,
\[ f(x) \perp y \text{ if and only if } x \perp h(y). \]
We say that $h$ is an {\it adjoint} of a {\em linear map} $f$. 
It is clear that adjointness is a symmetric property: if a map $f$ possesses an adjoint $h$, then $f$ is also an adjoint of $h$. 
We denote $\Cat{Lin}(X,Y)$ the set of all linear maps from $X$ to $Y$.
If $X=Y$ we put $\Cat{Lin}(X)=\Cat{Lin}(X,X)$.

Moreover, a map $f \colon X \to X$ is called {\it self-adjoint} if $f$ is an adjoint of itself.

The identity morphism on $X$ is the self-adjoint identity map $\idmap \colon X\rightarrow X$. Composition of $\smash{X \stackrel{f}{\rightarrow} Y
  \stackrel{g}{\rightarrow} Z}$ is given by usual composition of maps.
\end{definition}

\medskip

We immediately see that \Cat{SupOMLatLin} is really a category. Namely, if $h$ is an adjoint 
of $f$ and $k$ is an adjoint of $g$ we have, for any $x \in X$ and $z \in Z$,
\begin{align*}
    g(f(x)) \perp z \text{ if and only if } f(x) \perp k(z)  \text{ if and only if }  
    x \perp h(k(z)).
\end{align*}
Hence $h\circ k$ is an adjoint of $g\circ f$. Moreover, for any $x, y \in X$, 
\begin{align*}
    \idmap(x) \perp y \text{ if and only if } x \perp y  \text{ if and only if }  
    x \perp \idmap(y)
\end{align*}
and $\idmap \colon X\rightarrow X$ is self-adjoint.

Linear maps between complete orthomodular lattices exhibit an order-preserving property, as demonstrated in \cite[Lemma 5]{BLP}. It is worth noting that linear maps were investigated in a more general setting in \cite{PaVe4}.

\medskip

Our guiding example is as follows. It provides a specific instance of objects of the category of complete orthomodular lattices with linear maps. It demonstrates that the set of closed subspaces of a Hilbert space, equipped with the operations of intersection and orthogonal complementation, forms a complete orthomodular lattice. Furthermore, it shows how bounded linear operators between Hilbert spaces naturally induce linear maps between the corresponding lattices of closed subspaces, with the adjoint of the induced map being determined by the adjoint of the original operator.

\medskip

\begin{example}\label{exam:Hilbert}
Let $H$ be a Hilbert space. We denote the closed subspace spanned by a subset $S \subseteq H$ by $\lin S$. Let $C(H) = \{ \lin S \colon S \subseteq H \}$. Then $C(H)$ is a complete orthomodular lattice 
such that $\wedge=\cap$ and $P^{\perp}$is the orthogonal complement of a closed subspace $P$
of $H$.

Let $f \colon H_1 \to H_2$ be a bounded linear map between Hilbert spaces 
and let $f\adj$ be the usual adjoint of $f$. Then the induced map 
$C(H_1) \to C(H_2) \komma \lin S \mapsto \lin{f(S)}$ has the adjoint 
$C(H_2) \to C(H_1) \komma \lin T \mapsto \lin{f\adj(T)}$.
\end{example}

\medskip

The following lemma establishes a fundamental equivalence between different characterizations of linear maps between complete orthomodular lattices, connecting the existence of right order-adjoints, the existence of adjoints in the sense of the previous definition, and the preservation of arbitrary joins.

\medskip

\begin{lemma} \label{lem:lattice-adjoint}
Let $f \colon X \to Y$ and $h \colon Y \to X$ be maps between 
complete orthomodular
lattices. Then the following conditions are equivalent:
\begin{itemize}

\item[\rm (i)] $f$ is a linear map and possesses the adjoint $h \colon Y \to X$. 

\item[\rm (ii)] $f$ possesses a right order-adjoint $\hat{h} \colon Y \to X$ such that 
$\hat{h}={}^{\perp}\circ h\circ {}^{\perp}$.

\item[\rm (iii)] $f$ preserves arbitrary joins.
\end{itemize}
\end{lemma}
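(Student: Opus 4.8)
The plan is to establish the two equivalences $(i)\Leftrightarrow(ii)$ and $(ii)\Leftrightarrow(iii)$ separately. Throughout, recall that a \emph{right order-adjoint} $\hat h\colon Y\to X$ of $f$ means $f(x)\leq y \iff x\leq\hat h(y)$ for all $x\in X$, $y\in Y$. It is worth noting in advance that orthomodularity will play no role: everything uses only that $(-)^{\perp}$ is an order-reversing involution and that the lattices are complete.

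For $(i)\Leftrightarrow(ii)$ I would simply unfold the orthogonality relation. Since $a\perp b$ means $a\leq b^{\perp}$, the condition in $(i)$ that $h$ be an adjoint of $f$ reads: for all $x\in X$ and $y\in Y$, $f(x)\leq y^{\perp}$ iff $x\leq h(y)^{\perp}$. As $(-)^{\perp}$ is a bijection on $Y$, I may substitute $y^{\perp}$ for $y$ everywhere; using $y^{\perp\perp}=y$ this becomes $f(x)\leq y$ iff $x\leq h(y^{\perp})^{\perp}$, which is precisely the assertion that $\hat h := {}^{\perp}\circ h\circ{}^{\perp}$ is a right order-adjoint of $f$. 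Each step is a reversible equivalence, so $(i)$ and $(ii)$ are equivalent; moreover a right order-adjoint is unique when it exists (if $\hat h_1,\hat h_2$ both work, then $x\leq\hat h_1(y)\iff x\leq\hat h_2(y)$ for all $x$, forcing $\hat h_1=\hat h_2$), so the displayed relation pins down $h={}^{\perp}\circ\hat h\circ{}^{\perp}$.

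The equivalence $(ii)\Leftrightarrow(iii)$ is the adjoint-functor theorem for complete lattices (Galois connections), which I would reproduce concretely. For $(ii)\Rightarrow(iii)$: given $\hat h$ as above and any $S\subseteq X$, for every $y\in Y$ one has $f(\bigvee S)\leq y \iff \bigvee S\leq\hat h(y) \iff s\leq\hat h(y)$ for all $s\in S \iff f(s)\leq y$ for all $s\in S \iff \bigvee_{s\in S}f(s)\leq y$; antisymmetry yields $f(\bigvee S)=\bigvee_{s\in S}f(s)$, the empty case giving $f(0)=0$. For $(iii)\Rightarrow(ii)$: preservation of binary joins makes $f$ monotone (from $a\leq b$ we get $f(a)\disjun f(b)=f(a\disjun b)=f(b)$), and then I set $\hat h(y):=\bigvee\{x\in X : f(x)\leq y\}$, which exists by completeness of $X$. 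The implication $f(x)\leq y\Rightarrow x\leq\hat h(y)$ is immediate from the definition; conversely $x\leq\hat h(y)$ gives $f(x)\leq f(\hat h(y))=\bigvee\{f(x') : f(x')\leq y\}\leq y$ by $(iii)$ and monotonicity. Setting $h:={}^{\perp}\circ\hat h\circ{}^{\perp}$ then recovers $(ii)$, and by the symmetry of adjointness already recorded in Definition~\ref{def:SupOMLatLin} this $h$ is itself a linear map.

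I do not anticipate a real obstacle: the argument is entirely the interplay of $(-)^{\perp}$ with the order plus the poset adjoint-functor theorem. The only points demanding care are the correct placement of the two orthocomplements in $\hat h={}^{\perp}\circ h\circ{}^{\perp}$ (so that the $y\mapsto y^{\perp}$ substitution lands exactly on the right order-adjoint condition), the use of completeness of $X$ to form $\bigvee\{x : f(x)\leq y\}$, and keeping straight that $f$ is the \emph{left} adjoint — hence the join-preserving one — while $\hat h$ is the right adjoint.
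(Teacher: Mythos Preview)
Your proof is correct and follows the same route the paper indicates: the paper proves $(i)\Leftrightarrow(ii)$ by citing \cite[Lemma~6]{BLP} and dismisses $(ii)\Leftrightarrow(iii)$ as ``well-established,'' while you supply exactly the arguments those citations would contain (unfolding $\perp$ as $\leq(-)^{\perp}$ for the first, and the poset adjoint-functor theorem for the second). There is nothing to correct.
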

\begin{proof} Ad (i) $\Leftrightarrow$ (ii): See \cite[Lemma 6]{BLP}.

The equivalence between (ii) and (iii) is a well-established result.
%
\end{proof}

\medskip

If $f$ does have an adjoint, it has exactly one adjoint and we shall denote it by $f\adj$.

\medskip 

The following theorem demonstrates that the category of complete orthomodular lattices with linear maps forms a dagger category. 

\medskip 

\begin{theorem}\label{OMLisdagger}
    \Cat{SupOMLatLin} is  a dagger category.
\end{theorem}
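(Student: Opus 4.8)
The plan is to verify the three defining properties of a dagger category from Definition~\ref{DagcatDef}: that $(-)\adj$ is the identity on objects, that it is contravariantly functorial (i.e.\ reverses composition and fixes identities), and that it is involutive. The object part is immediate since a linear map $f\colon X\to Y$ and its adjoint $f\adj$ go between the same two lattices, so $\adj$ leaves objects untouched. The remaining work is all about adjoints of linear maps, and the crucial preliminary observation is \emph{uniqueness} of adjoints, already recorded in the excerpt just before the theorem: if $f$ has an adjoint then it has exactly one, written $f\adj$. This lets us speak of $\adj$ as a genuine operation on morphisms rather than a relation.

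First I would check that $\adj$ is well defined as a map $\Cat{Lin}(X,Y)\to\Cat{Lin}(Y,X)$: given a linear map $f$ with adjoint $h=f\adj$, the symmetry of the defining biconditional $f(x)\perp y \iff x\perp h(y)$ shows that $h$ is itself a linear map with adjoint $f$; hence $f\adj\in\Cat{Lin}(Y,X)$ and $(f\adj)\adj = f$, giving involutivity in one stroke. Next, functoriality: for $X\xrightarrow{f}Y\xrightarrow{g}Z$, the chain of equivalences already displayed in the text after Definition~\ref{def:SupOMLatLin} shows $h\after k$ is an adjoint of $g\after f$ where $h=f\adj$, $k=g\adj$; by uniqueness $(g\after f)\adj = f\adj\after g\adj$. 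Finally, the text observes $\idmap_X$ is self-adjoint, so $(\idmap_X)\adj=\idmap_X$, and all the coherence squares in the diagram in Definition~\ref{DagcatDef} commute.

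Honestly, there is no real obstacle here: every ingredient — uniqueness of adjoints, the composition computation, self-adjointness of identities, and the symmetry of the orthogonality condition — has already been established or explicitly displayed in the excerpt, so the proof is essentially an assembly job, just citing these facts in the right order and invoking uniqueness to upgrade ``is an adjoint of'' to ``equals $(\,\cdot\,)\adj$.'' If I wanted to be careful about anything, it would be making sure the existence of adjoints (hence membership in $\Cat{Lin}$) is tracked at each step so that $\adj$ really lands in the morphism sets of the category, but this too is handled by the symmetry remark. Thus the theorem follows by collecting these observations.
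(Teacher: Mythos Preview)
Your proposal is correct and follows essentially the same approach as the paper: the paper's proof is a one-liner invoking uniqueness of adjoints to conclude that ${}\adj$ is an involutive identity-on-objects functor, and you simply unpack this into the explicit verification of each axiom (identity on objects, contravariant functoriality, involutivity), citing exactly the ingredients the paper has already laid out after Definition~\ref{def:SupOMLatLin}. Your version is more detailed than the paper's, but the underlying argument is identical.
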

\begin{proof}
    Since every morphism in \Cat{SupOMLatLin} has a unique adjoint we obtain that 
    ${}\adj\colon \Cat{SupOMLatLin}\op \to \Cat{SupOMLatLin}$ is involutive and the identity on objects.
\end{proof}

\begin{remark}\rm 
Jacobs in \cite{Jac} introduced the category \Cat{OMSupGal} with complete orthomodular
lattices as objects.
A morphism $X\rightarrow Y$ in \Cat{OMSupGal} is a pair $f =
  (f_{\bullet}, f^{\bullet})$ of ``antitone'' functions $f_{\bullet}\colon X\op
  \rightarrow Y$ and $f^{\bullet}\colon Y \rightarrow X\op$ forming a Galois
  connection (or order adjunction $f^{\bullet}\dashv f_{\bullet}$): 
  \begin{center}
      $x\leq f^{\bullet}(y)$\quad iff\quad $y\leq f_{\bullet}(x)$\quad for\ $x\in X$\ and\ $y\in Y$.
  \end{center}
  
\medskip
The identity morphism on $X$ is the pair $(\bot,\bot)$ given by the
self-adjoint map $\idmap^{\bullet} = \idmap[\bullet] = (-)^{\perp} \colon X\op
\rightarrow X$. Composition of $\smash{X \stackrel{f}{\rightarrow} Y
  \stackrel{g}{\rightarrow} Z}$ is given by:
$$\begin{array}{rclcrcl}
(g\after f)_{\bullet}
& = &
g_{\bullet} \after \bot \after f_{\bullet}
& \quad\mbox{and}\quad &
(g\after f)^{\bullet}
& = &
f^{\bullet} \after \bot \after g^{\bullet}.
\end{array}$$

The Galois connection yields that $f_{\bullet}$ preserves meets, as right
adjoint, and thus sends joins in $X$ (meets in $X\op$) to meets in
$Y$, and dually, $f^{\bullet}$ preserves joins and sends joins in $Y$ to
meets in $X$. 

The category $\Cat{OMSupGal}$ has a dagger, namely by twisting:
$$\begin{array}{rcl}
(f_{\bullet},f^{\bullet})^{\adj}
& = &
(f^{\bullet},f_{\bullet}).
\end{array}$$
\end{remark}

\begin{definition}\label{def:daggeriso}
Two dagger categories, $\mathbf{C}$ and $\mathbf{D}$, are said to be {\em{dagger isomorphic}} if there exists a functor $F: \mathbf{C} \to \mathbf{D}$ that satisfies the following conditions:

\begin{enumerate}
    \item \textbf{Isomorphism:} $F$ is an isomorphism of categories, meaning it is both full, faithful, and essentially surjective. 

    \item \textbf{Dagger Preservation:} $F$ preserves the dagger operation, meaning that for any morphism $f$ in $\mathbf{C}$, we have $F(f^{\adj}) = F(f)^{\adj}$.
\end{enumerate}

\end{definition}

\medskip

This theorem establishes a connection between categories 
\Cat{OMSupGal} and \Cat{SupOMLatLin} by showing they are essentially the same from a categorical perspective, being related by a dagger isomorphism.  This isomorphism preserves not only the categorical structure but also respects the dagger operation, revealing a deep symmetry in these mathematical frameworks.

\medskip

 \begin{theorem} \label{thm:daggiso}
 \Cat{OMSupGal} and \Cat{SupOMLatLin} are dagger isomorphic via functors 
 $\Lambda\colon \Cat{SupOMLatLin} \to \Cat{OMLatGal}$ and $\Gamma\colon  \Cat{OMLatGal} \to \Cat{OMLatLin} $ which are identities on objects and otherwise given by 
 $$
 \Lambda(f)=(\bot \after f, \bot \after f^{\adj})\quad\text{ and }
\quad\Gamma (f_{\bullet}, f^{\bullet})=\bot \after f_{\bullet}.
 $$
 \end{theorem}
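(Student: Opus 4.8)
The plan is to check, in turn, that $\Lambda$ and $\Gamma$ are well defined on morphisms, that they are functorial, that they are mutually inverse, and that they intertwine the two daggers; throughout, essentially all the work is done by the single identity $\bot\after\bot=\idmap$ (i.e.\ $x^{\perp\perp}=x$) together with the fact that $\bot$ reverses order and $\perp$ is symmetric.

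First I would verify that $\Lambda$ lands in $\Cat{OMSupGal}$. Given a linear map $f\colon X\to Y$ with its (unique) adjoint $f\adj$, I must show the pair $(\bot\after f,\ \bot\after f\adj)$ is a Galois connection, i.e.\ $x\le f\adj(y)^{\perp}$ iff $y\le f(x)^{\perp}$ for all $x\in X$, $y\in Y$. Unfolding the orthocomplements, $x\le f\adj(y)^{\perp}$ says $x\perp f\adj(y)$, which by the defining property of the adjoint is equivalent to $f(x)\perp y$, and this, since $\perp$ is symmetric and $\bot$ order-reversing, is equivalent to $y\le f(x)^{\perp}$. Dually, I would check that $\Gamma$ lands in $\Cat{SupOMLatLin}$: for a Galois connection $(f_{\bullet},f^{\bullet})$ the map $\bot\after f_{\bullet}\colon X\to Y$ is linear with adjoint $\bot\after f^{\bullet}$, the verification of $(\bot\after f_{\bullet})(x)\perp y \iff x\perp(\bot\after f^{\bullet})(y)$ again reducing, after cancelling a $\bot\after\bot$, to the Galois equivalence $y\le f_{\bullet}(x)\iff x\le f^{\bullet}(y)$. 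By uniqueness of adjoints this also records that $(\bot\after f_{\bullet})\adj=\bot\after f^{\bullet}$, a fact I will need below.

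Next I would verify functoriality. On identities, $\Lambda(\idmap)=(\bot\after\idmap,\bot\after\idmap)=(\bot,\bot)$, which is exactly the identity morphism of $\Cat{OMSupGal}$, and $\Gamma(\bot,\bot)=\bot\after\bot=\idmap$. For composable $f\colon X\to Y$ and $g\colon Y\to Z$, using the composition rule of $\Cat{OMSupGal}$ recalled in the remark together with $(g\after f)\adj=f\adj\after g\adj$, both components of $\Lambda(g\after f)$ and of $\Lambda(g)\after\Lambda(f)$ collapse---after cancelling an internal $\bot\after\bot$---to $\bot\after g\after f$ and $\bot\after f\adj\after g\adj$ respectively, so $\Lambda$ is a functor; functoriality of $\Gamma$ then follows from the inverse relation below (or by the same direct computation). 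For the mutual inverse property, $\Gamma(\Lambda f)=\bot\after(\bot\after f)=f$, and $\Lambda(\Gamma(f_{\bullet},f^{\bullet}))=\Lambda(\bot\after f_{\bullet})=\bigl(\bot\after\bot\after f_{\bullet},\ \bot\after(\bot\after f_{\bullet})\adj\bigr)=(f_{\bullet},f^{\bullet})$, using the identification of $(\bot\after f_{\bullet})\adj$ from the previous step; hence $\Lambda$ is an isomorphism of categories, identity on objects, with inverse $\Gamma$.

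Finally, for dagger preservation: since adjointness is symmetric, $(f\adj)\adj=f$, so $\Lambda(f\adj)=(\bot\after f\adj,\ \bot\after f)$, whereas the twist dagger of $\Cat{OMSupGal}$ gives $\Lambda(f)\adj=(\bot\after f,\bot\after f\adj)\adj=(\bot\after f\adj,\ \bot\after f)$; these coincide, and applying $\Gamma$ (or arguing symmetrically) gives $\Gamma((g_{\bullet},g^{\bullet})\adj)=\Gamma(g_{\bullet},g^{\bullet})\adj$ as well. I do not anticipate a genuine obstacle: the whole argument is bookkeeping with the involution $\bot$ and with the twisted composition and twisted dagger of $\Cat{OMSupGal}$. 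The only point needing a little care is confirming that $\Gamma$ really yields a \emph{linear} map, i.e.\ exhibiting and checking its adjoint, since that is precisely where the Galois condition of $\Cat{OMSupGal}$ is translated into the orthogonality condition of Definition~\ref{def:SupOMLatLin}.
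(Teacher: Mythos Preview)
Your proposal is correct and follows essentially the same route as the paper: both verify that $\Lambda$ and $\Gamma$ are well defined on morphisms via the same orthogonality--Galois unfolding, both exploit $\bot\after\bot=\idmap$ throughout, and both check dagger preservation by the twist. The only cosmetic difference is that the paper phrases the bijection on hom-sets as ``$\Lambda$ is full and faithful'' (using the computation $\Lambda(\Gamma(f_{\bullet},f^{\bullet}))=(f_{\bullet},f^{\bullet})$ for fullness and $\bot\after f=\bot\after g\Rightarrow f=g$ for faithfulness), whereas you phrase it as ``$\Lambda$ and $\Gamma$ are mutually inverse''; your version is in fact slightly more complete, since you also explicitly verify that $\Lambda$ preserves identities and composition, which the paper leaves implicit.
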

 \begin{proof} First, let us check that $ \Lambda$ and $\Gamma$ are correctly defined. Suppose that $X, Y$ are complete orthomodular 
 lattices and $f \colon X \to Y$ a linear map. Evidently, 
 $\bot \after f$ and $\bot \after f^{\adj}$ are antitone maps. 
 Let $x\in X$\ and\ $y\in Y$. We compute:
\begin{align*}
  x\leq (\bot \after f^{\adj})(y)\quad 
      &\text{iff}\quad 
      x\leq  (f^{\adj}(y))^{\bot}\quad 
      \text{iff}\quad 
      x\perp  f^{\adj}(y)\quad 
      \text{iff}\quad 
      f(x)\perp  y\quad\\ 
      &\text{iff}\quad y\leq (f(x))^{\bot}\quad 
      \text{iff}\quad y\leq (\bot \after f)(x).
  \end{align*}

Conversely, let $(f^{\bullet},f_{\bullet})$ be a 
morphism  in \Cat{OMSupGal}\ between $X$ and $Y$. Again, let 
$x\in X$\ and\ $y\in Y$. We have:
\begin{align*}
  x\perp (f^{\bullet}(y))^{\perp}\quad 
      &\text{iff}\quad 
      x\leq  f^{\bullet}(y)\quad 
      \text{iff}\quad  y\leq  f_{\bullet}(x) %
      \quad  \text{iff}\quad  y\perp  (f_{\bullet}(x))^{\perp}.
  \end{align*}

Let us check that $\Lambda$ is both full and faithful. Assume that 
$X, Y$ are complete orthomodular lattices 
and $(f^{\bullet},f_{\bullet})$ be a 
morphism  in \Cat{OMSupGal}\ between $X$ and $Y$.

Then $\Gamma (f_{\bullet}, f^{\bullet})=\bot \after f_{\bullet}:X\to Y$ is a linear map between $X$ and $Y$ and 
\begin{align*}\Lambda(\bot \after f_{\bullet})=%
(\bot \after (\bot \after f_{\bullet}), \bot \after (\bot \after f_{\bullet})^{\adj})=(f_{\bullet}, \bot \after (\bot \after f^{\bullet}))=
(f_{\bullet}, f^{\bullet}).
\end{align*}

Now, let $f,g\colon X\to Y$ be linear maps such that 
$\Lambda(f)=\Lambda(g)$. Then $\bot \after f=\bot \after g$. 
Hence $f=\bot \after\bot \after f=\bot \after\bot \after g=g$. 

Since $\Lambda$ is identity on objects it is 
essentially surjective.

Let $f \colon X \to Y$ be a linear map. We compute: 
\begin{align*}\Lambda(f\adj)=%
(\bot \after f\adj, \bot \after {f\adj}\adj)%
=(\bot \after f\adj, \bot \after f)=(\bot \after f,\bot \after f{\adj})^{*}=\Lambda(f)\adj
\end{align*}
\end{proof}

\begin{remark}\label{rem:omlatlin}\rm
The aforementioned Theorem would allow us to formulate most of the following statements without proofs. However, we prefer to describe, for example, the specific realization of dagger monomorphisms, dagger 
epimorphisms, etc. The reason for this is that we are motivated by Example \ref{exam:Hilbert}.
\end{remark}

\section{A dagger kernel category of complete orthomodular lattices}
\label{sec:dkccoml}

Let us recall the following elementary results and definitions.

\medskip 

Lemma \ref{DownsetLem} (as mentioned e.g. in \cite[Lemma 3.4]{Jac}) demonstrates a well-known result that for any element $a$ in an orthomodular lattice $X$, the set of all elements less than or equal to $a$ (the principal downset of $a$) inherits the structure of an orthomodular lattice with respect to the induced order and a suitably defined orthocomplement.

\medskip 

\begin{lemma} {\rm\cite[Lemma 3.4]{Jac}}
\label{DownsetLem}
Let $X$ be an orthomodular lattice, with element $a\in X$. 
The (principal) downset $\downset a = \setin{u}{X}{u \leq a}$ is
  again an orthomodular lattice, with order, meets and
  joins as in $X$, but with its own orthocomplement $\perp_a$ given
  by $u^{\perp_a} = a \conjun u^{\perp}$, where $\perp$ is the
  orthocomplement from $X$.
\end{lemma}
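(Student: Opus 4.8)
The statement to prove is Lemma~\ref{DownsetLem}: for an orthomodular lattice $X$ and $a \in X$, the downset $\downset a$ with the inherited order, meets, joins, and with orthocomplement $u^{\perp_a} = a \conjun u^{\perp}$, is again an orthomodular lattice.

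\medskip

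\textbf{Plan of proof.} The approach is to verify directly the axioms from Definition~\ref{OMLatDef} for the structure $(\downset a, \conjun, a, (-)^{\perp_a})$. First I would observe that $\downset a$ is closed under the meet of $X$ (if $u,v \le a$ then $u \conjun v \le a$) and has $a$ as its top element, so $(\downset a, \conjun, a)$ is a meet semi-lattice; the order is just the restriction of the order of $X$. Then I would check that $(-)^{\perp_a} \colon \downset a \to \downset a$ is well-defined, i.e. $a \conjun u^{\perp} \le a$, which is immediate.

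\medskip

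Next, the three ortholattice axioms. For involutivity $u^{\perp_a \perp_a} = u$ for $u \le a$: compute $u^{\perp_a\perp_a} = a \conjun (a \conjun u^\perp)^\perp = a \conjun (a^\perp \disjun u)$, and since $u \le a$ this equals $u$ precisely by the second form of the orthomodular law in $X$ ($x \le y$ implies $x = y \conjun (y^\perp \disjun x)$, with $x = u$, $y = a$) --- \emph{this is the step where orthomodularity of $X$ is genuinely used}, and I expect it to be the main (only) real obstacle. For antitonicity: if $u \le v \le a$ then $u^\perp \ge v^\perp$, so $a \conjun u^\perp \ge a \conjun v^\perp$, i.e. $v^{\perp_a} \le u^{\perp_a}$. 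For the contradiction law: $u \conjun u^{\perp_a} = u \conjun a \conjun u^\perp = u \conjun u^\perp = 0 = a \conjun a^\perp = a^{\perp_a} = 1^{\perp_a}$ (recalling $1_{\downset a} = a$), using $u \le a$.

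\medskip

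Finally, orthomodularity of $\downset a$. It suffices to derive one of the three equivalent conditions --- say the third: for $u,v \le a$ with $u \le v$ and $u^{\perp_a} \conjun v = 0$, show $u = v$. Here $u^{\perp_a} \conjun v = a \conjun u^\perp \conjun v = u^\perp \conjun v$ since $v \le a$; so the hypothesis becomes exactly $u \le v$ and $u^\perp \conjun v = 0$, and orthomodularity of $X$ gives $u = v$. I should also note, as the lemma asserts, that joins in $\downset a$ agree with those in $X$: for $u, v \le a$ the join $u \disjun v$ computed in $X$ is already $\le a$, hence is the least upper bound in $\downset a$ as well; this is consistent with the derived join formula $u \disjun_a v = (u^{\perp_a} \conjun v^{\perp_a})^{\perp_a}$, which one can expand and simplify to $u \disjun v$ using the identities above if one wants an internal check. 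The whole proof is a sequence of short lattice computations; the only place requiring care, and the place the hypothesis of orthomodularity is essential rather than decorative, is the involutivity of $\perp_a$.
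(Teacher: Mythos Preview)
Your proof is correct. The paper does not actually prove this lemma: it is stated with a citation to \cite[Lemma 3.4]{Jac} and treated as a well-known result, so there is no ``paper's own proof'' to compare against. Your direct verification of the axioms from Definition~\ref{OMLatDef} is the standard argument, and you have correctly identified that orthomodularity of $X$ is used essentially (and only) in establishing the involutivity $u^{\perp_a\perp_a}=u$.
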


\medskip 

The Sasaki projection plays a fundamental role in the theory of orthomodular lattices, capturing a unique type of projection operation that respects the quantum-like structure of these mathematical objects. The following definition introduces a map that will prove to be essential for understanding how elements in an orthomodular lattice relate to each other, particularly in contexts where classical Boolean logic needs to be generalized. The projection is named after Shôichirô Sasaki, who made significant contributions to the study of quantum logic and its algebraic foundations.

\medskip

\begin{definition}\label{def:Sasaki projection}
		Let $X$ be an orthomodular lattice. Then the map $\pi_a:X\to X$, $y\mapsto a\wedge(a^\perp\vee y)$ is called the \emph{Sasaki projection} to $a\in X$.
	\end{definition}

    \medskip 

We need the following facts about Sasaki projections 
 (see \cite{foulis1962note} and \cite{LiVe}):

 \medskip 
	
	\begin{lemma}\label{lem:Sasaki projection facts}
		Let $X$ be an orthomodular lattice, and let $a\in X$. Then for each $y,z\in L$ we have
		\begin{itemize}
			\item[(a)] $y\leq a$ if and only if $\pi_a(y)=y$;
			\item[(b)] $\pi_a(\pi_a(y^\perp)^\perp))=a\wedge y\leq y$;
			\item[(c)] $\pi_a(y)=0$ if and only if $y\leq a^\perp$;
			\item[(d)] $\pi_a(y)\perp z$ if and only if $y\perp \pi_a(z)$.
		\end{itemize}
	\end{lemma}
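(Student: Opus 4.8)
The plan is to verify the four items of Lemma~\ref{lem:Sasaki projection facts} by direct computation in the orthomodular lattice $X$, using only the axioms of Definition~\ref{OMLatDef} and the elementary Lemma~\ref{DownsetLem}. For (a), if $y\le a$ then $a^\perp\disjun y\ge y$, so $\pi_a(y)=a\conjun(a^\perp\disjun y)\ge a\conjun y=y$; the reverse inequality $\pi_a(y)\le a$ is immediate from the definition, and orthomodularity (applied to $y\le a^\perp\disjun y$ inside the interval, or rather to the pair $y\le\pi_a(y)$ together with $\pi_a(y)\conjun y^\perp\le\ldots$) forces equality. More cleanly: $\pi_a(y)\le a$ always, and if $\pi_a(y)=y$ then $y\le a$; conversely $y\le a$ gives $a^\perp\disjun y\le a^\perp\disjun a=1$ and by the orthomodular law applied to $a^\perp\le a^\perp\disjun y$ one gets $a\conjun(a^\perp\disjun y)=y$ since $y\le a$ and $y\perp a^\perp$. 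I would spell this out using the third form of orthomodularity.

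For (c), $\pi_a(y)=0$ means $a\conjun(a^\perp\disjun y)=0$; since $y\le a^\perp\disjun y$ this already needs checking, so I would argue: if $y\le a^\perp$ then $a^\perp\disjun y=a^\perp$ and $\pi_a(y)=a\conjun a^\perp=0$; conversely if $a\conjun(a^\perp\disjun y)=0$, set $b=a^\perp\disjun y\ge a^\perp$; then $b^\perp\le a$ and $b^\perp\conjun a^\perp\le b^\perp\conjun b=0$... actually the quick route is: $a\conjun(a^\perp\disjun y)=0$ and $a^\perp\le a^\perp\disjun y$, so by orthomodularity $a^\perp\disjun y=a^\perp\disjun(a\conjun(a^\perp\disjun y))=a^\perp\disjun 0=a^\perp$, whence $y\le a^\perp$. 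For (b), I would compute $\pi_a(\pi_a(y^\perp)^\perp)$ step by step. Note $\pi_a(y^\perp)=a\conjun(a^\perp\disjun y^\perp)$, so $\pi_a(y^\perp)^\perp=a^\perp\disjun(a\conjun y)^{\phantom{\perp}}$... more precisely $\pi_a(y^\perp)^\perp = (a\conjun(a^\perp\disjun y^\perp))^\perp = a^\perp\disjun(a^\perp\disjun y^\perp)^\perp = a^\perp\disjun(a\conjun y)$, using De Morgan and $x^{\perp\perp}=x$. Then $\pi_a$ of this is $a\conjun(a^\perp\disjun(a^\perp\disjun(a\conjun y)))=a\conjun(a^\perp\disjun(a\conjun y))$, and since $a\conjun y\le a$, item (a) gives $\pi_a(a\conjun y)=a\conjun y$, so the whole expression equals $a\conjun y$, which is $\le y$.

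For (d), this is the self-adjointness of the Sasaki projection; I would prove $\pi_a$ has $\pi_a$ itself as an adjoint in the sense that $\pi_a(y)\perp z\iff y\perp\pi_a(z)$. By symmetry it suffices to show one implication. Assume $\pi_a(y)\perp z$, i.e. $z\le\pi_a(y)^\perp=a^\perp\disjun(a\conjun y^\perp)$. I want $\pi_a(z)\le y^\perp$, i.e. $a\conjun(a^\perp\disjun z)\le y^\perp$. Since $z\le a^\perp\disjun(a\conjun y^\perp)$ we get $a^\perp\disjun z\le a^\perp\disjun(a\conjun y^\perp)$, hence $a\conjun(a^\perp\disjun z)\le a\conjun(a^\perp\disjun(a\conjun y^\perp))=\pi_a(a\conjun y^\perp)=a\conjun y^\perp\le y^\perp$ by (a), as needed. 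The symmetric argument handles the converse, so (d) follows.

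The main obstacle is not any single step but making sure each use of the orthomodular law is legitimate — i.e.\ that the relevant comparability hypothesis $x\le y$ holds before invoking $y=x\disjun(x^\perp\conjun y)$ — and keeping the De Morgan bookkeeping in (b) straight. I expect (b) to be the most calculation-heavy, and (d) to be the one whose proof idea (reducing to (a) after absorbing the outer Sasaki projection) is least obvious; everything else is a short chain of (in)equalities. I would also remark that (d) says exactly that $\pi_a$ is a self-adjoint linear map in the sense of Definition~\ref{def:SupOMLatLin}, which is why it will be reused later.
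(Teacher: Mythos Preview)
Your proof is correct. The paper itself does not prove this lemma at all: it merely cites \cite{foulis1962note} and \cite{LiVe} and moves on, so your self-contained argument already goes beyond what the paper provides. Parts (b), (c), and (d) are clean and exactly the right computations; in particular, reducing (d) to (a) via $\pi_a(y)^\perp=a^\perp\disjun(a\conjun y^\perp)$ and monotonicity is the standard and efficient route.

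The only place worth tightening is (a). Your two attempts both circle the target without landing on the one-line argument: from $y\le a$ pass to $a^\perp\le y^\perp$, apply the first form of orthomodularity to get $y^\perp=a^\perp\disjun(a\conjun y^\perp)$, and take orthocomplements to obtain $y=a\conjun(a^\perp\disjun y)=\pi_a(y)$. This avoids the vaguer ``orthomodularity \ldots\ forces equality'' and the appeal to the third form, which as you wrote it would still require checking $y^\perp\conjun\pi_a(y)=0$. With that adjustment the whole lemma is a tidy direct verification.
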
    

\medskip

The following definition recalls two fundamental concepts in the study of morphisms between complete orthomodular lattices: the kernel and range. While these notions mirror their counterparts in linear algebra and other mathematical structures, their behavior in the context of orthomodular lattices reveals unique properties that reflect the quantum-logical nature of these spaces. The kernel comprises all elements that map to zero, while the range consists of all elements that can be reached by the morphism. A key distinction between them is that while a kernel always forms a complete orthomodular lattice, the range only forms a complete lattice.

\medskip

\begin{definition}\label{def:kernel}
    Let $f \colon X \to Y$ be a morphism of complete orthomodular lattices. 
    We define the {\it kernel} and the {\it range} of $f$, respectively, by
\begin{align*}
\kernel f \;=\; & \{ x \in X \colon f(x) = 0 \}, \\
\image f \;=\; & \{ f(x) \colon x \in X \}.
\end{align*}
\end{definition}

\medskip 

Corollary \ref{Sasself} reveals several important structural properties of the Sasaki projection that make it particularly useful in the study of orthomodular lattices. The fact that it is both self-adjoint and idempotent shows it behaves like a classical projection operator, while its image being the down-set of the element $a$ provides a clear geometric picture of where the projection maps elements. These properties are reminiscent of orthogonal projections in Hilbert spaces, which is not surprising given the deep connections between orthomodular lattices and quantum mechanics.

\medskip 

    \begin{corollary}\label{Sasself}
     Let $X$ be a complete orthomodular lattice, and let $a\in X$. Then 
     $\pi_a$ is self-adjoint, idempotent and 
     $\image \pi_a=\downset a$.
 \end{corollary}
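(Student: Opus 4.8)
The plan is to derive all three assertions almost directly from Lemma~\ref{lem:Sasaki projection facts}, together with the definition of a linear map in Definition~\ref{def:SupOMLatLin}.

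First, for self-adjointness, I would observe that part~(d) of Lemma~\ref{lem:Sasaki projection facts}, namely $\pi_a(y)\perp z$ if and only if $y\perp\pi_a(z)$, is verbatim the adjointness equation of Definition~\ref{def:SupOMLatLin} instantiated at $f=h=\pi_a$. Hence $\pi_a$ is an adjoint of itself; in particular $\pi_a$ is a linear map, so it is a morphism of $\Cat{SupOMLatLin}$, and it is self-adjoint. (If one wants to be fully self-contained, Lemma~\ref{lem:lattice-adjoint} then additionally yields for free that $\pi_a$ preserves arbitrary joins, though this is not needed for the statement.)

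Next, for idempotency and the computation of the image, the single key observation is that $\pi_a(y)=a\wedge(a^\perp\vee y)\le a$ for every $y\in X$. Applying part~(a) of Lemma~\ref{lem:Sasaki projection facts} to $\pi_a(y)$ in place of $y$ gives $\pi_a(\pi_a(y))=\pi_a(y)$, i.e.\ $\pi_a\circ\pi_a=\pi_a$. The same observation $\pi_a(y)\le a$ gives the inclusion $\image\pi_a\subseteq\downset a$, and for the reverse inclusion, any $u\in\downset a$ satisfies $u\le a$, so part~(a) of Lemma~\ref{lem:Sasaki projection facts} gives $\pi_a(u)=u$, whence $u\in\image\pi_a$. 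Thus $\image\pi_a=\downset a$.

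There is no real obstacle here, since the substantive work has been front-loaded into Lemma~\ref{lem:Sasaki projection facts}; the only point deserving a moment's care is recognizing that part~(d) is exactly the adjointness condition of Definition~\ref{def:SupOMLatLin}, so that $\pi_a$ qualifies as a self-adjoint morphism of $\Cat{SupOMLatLin}$ and not merely an order-preserving self-map.
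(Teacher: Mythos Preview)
Your proposal is correct and matches the paper's approach: the paper states Corollary~\ref{Sasself} without proof, presenting it as an immediate consequence of Lemma~\ref{lem:Sasaki projection facts}, and your argument spells out exactly that derivation---part~(d) for self-adjointness, and part~(a) together with $\pi_a(y)\le a$ for idempotency and the image computation.
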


\medskip 

 The following characterization of Sasaki maps reformulates 
\cite[page 144]{Kalmbach83}. 

\medskip 
 \begin{proposition}\label{charsasa}
Let $X$ be a complete orthomodular lattice, $f\colon X\to X$ a map and $a=f(1)$. 
Then the following conditions are equivalent:
\begin{itemize}

\item[\rm (i)] $f=\pi_a$;
\item[\rm (ii)] $f$ is self-adjoint linear, idempotent and $\image f=\downset a$;
\item[\rm (iii)] $f$ is self-adjoint linear, $f(a)=a$ and $x\in\downset a$ implies $f(x)\leq x$;
\item[\rm (iv)] $f$ is self-adjoint linear and $x\in\downset a$ implies $f(x)=x$.
\end{itemize}
\end{proposition}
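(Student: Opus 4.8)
The plan is to prove the four conditions equivalent by establishing the cycle (i)~$\Rightarrow$~(ii)~$\Rightarrow$~(iii)~$\Rightarrow$~(iv)~$\Rightarrow$~(i), drawing on the facts already collected about Sasaki projections in Lemma~\ref{lem:Sasaki projection facts} and Corollary~\ref{Sasself}. Throughout, recall that $a = f(1)$ is fixed, so any condition forcing $f = \pi_a$ must be compatible with $\pi_a(1) = a \wedge (a^\perp \vee 1) = a \wedge 1 = a$.

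For (i)~$\Rightarrow$~(ii): this is essentially immediate. If $f = \pi_a$ then $f$ is linear (it preserves arbitrary joins, or directly by Lemma~\ref{lem:Sasaki projection facts}(d) it is self-adjoint hence linear by Lemma~\ref{lem:lattice-adjoint}), and Corollary~\ref{Sasself} gives self-adjointness, idempotence and $\image f = \downset a$ at once. For (ii)~$\Rightarrow$~(iii): from $\image f = \downset a$ and $a = f(1)$ we get $f(a) \in \downset a$, so $f(a) \le a$; conversely idempotence gives $f(a) = f(f(1)) = f(1) = a$, hence $f(a) = a$. If $x \in \downset a$, write $x = f(w)$ for some $w$ using $\image f = \downset a$ is not quite enough; instead use idempotence directly: $x \in \downset a = \image f$ means $x = f(x')$ for some $x'$, so $f(x) = f(f(x')) = f(x') = x \le x$. (In fact this already gives $f(x) = x$, which is the stronger (iv)-type conclusion, but stating it as $f(x)\le x$ suffices here.) For (iii)~$\Rightarrow$~(iv): suppose $x \le a$; we must upgrade $f(x) \le x$ to $f(x) = x$. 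The natural tool is the orthomodular law in the form of Definition~\ref{OMLatDef}, third clause: $f(x) \le x$ together with $f(x)^\perp \wedge x = 0$ would give $f(x) = x$. To see $f(x)^\perp \wedge x = 0$, equivalently $x \le f(x)$, I would use self-adjointness: for any $z$, $z \perp f(x) \iff f(z) \perp x$; taking $z = x - \text{something}$, or better, computing $\inprod{\cdot}{\cdot}$-style, we want to show $x \perp f(x)^{\perp_a}$ inside $\downset a$. Since $f(x) \le x \le a$, set $u = x \wedge f(x)^\perp = x^{\perp_a \perp_a}$-complement-wise; self-adjointness of $f$ and $f(a) = a$ (which follows from (iii)) should force $f(u) = u$ and $f(u) \le f(x)^\perp$, combined with $u \le x$ and $f(x) \le x$, pinning $u = 0$, hence $f(x) = x$ by orthomodularity in $\downset a$. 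This is the step I expect to be the main obstacle: extracting $f(x) = x$ from the one-sided inequality requires carefully exploiting self-adjointness and the orthomodular identity, and getting the orthocomplements in $\downset a$ versus $X$ straight.

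For (iv)~$\Rightarrow$~(i): assume $f$ is self-adjoint linear and $f(x) = x$ for all $x \le a$. I want $f = \pi_a$. For arbitrary $y \in X$, compute $f(y)$: since $f$ is linear (join-preserving) and $f = \id$ on $\downset a$, and $\pi_a$ is the unique such map. Concretely, by Lemma~\ref{lem:Sasaki projection facts}(b), $\pi_a(\pi_a(y^\perp)^\perp) = a \wedge y$, and one knows $\pi_a(y) = a \wedge (a^\perp \vee y)$. To show $f(y) = \pi_a(y)$, first note $f(y) \le a$: indeed $f(y) \le f(1) = a$ since $f$ is order-preserving (Lemma~\ref{lem:lattice-adjoint} via \cite[Lemma~5]{BLP}) and $y \le 1$. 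Also $f(y) \ge$ something: using self-adjointness, $f(y) \perp z \iff y \perp f(z)$; for $z \le a^\perp$... hmm, rather, decompose $y$ relative to $a$. The cleanest route: show $f(y) = f(\pi_a(y))$ because $y$ and $\pi_a(y)$ differ by a piece orthogonal to $a$, on which $f$ vanishes — $f(z) = 0$ whenever $z \le a^\perp$, since $z \perp a$ gives $z \perp f(1)$ hence $f(z) \perp 1$ by self-adjointness... wait, that needs $f$ to send $a^\perp$ to $0$, which follows because for $z \le a^\perp$ and any $w$, $f(z) \perp w \iff z \perp f(w)$ and $f(w) \le a$ so $z \perp f(w)$ always, giving $f(z) \perp w$ for all $w$, so $f(z) = 0$. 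Then $f(y) = f(\pi_a(y) \vee (y \wedge a^\perp)) = f(\pi_a(y)) \vee f(y \wedge a^\perp) = \pi_a(y) \vee 0 = \pi_a(y)$, using that $y = \pi_a(y) \vee (y \wedge a^\perp)$ fails in general — so instead I would use that $y \le \pi_a(y) \vee a^\perp$ and $\pi_a(y) \le y$, sandwiching via monotonicity and $f$-invariance on $\downset a$ to conclude $f(y) = \pi_a(y)$. The details of this sandwich are routine orthomodular manipulation once the vanishing-on-$\downset{a^\perp}$ and identity-on-$\downset a$ facts are in hand.

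In summary: (i)$\Rightarrow$(ii) is Corollary~\ref{Sasself}; (ii)$\Rightarrow$(iii) and (iv)$\Rightarrow$(i) are short manipulations using idempotence, self-adjointness and the reference \cite[page~144]{Kalmbach83}; the crux is (iii)$\Rightarrow$(iv), i.e.\ promoting $f(x)\le x$ to $f(x)=x$ on $\downset a$, which I will handle by an application of the orthomodular law combined with self-adjointness of $f$ and $f(a)=a$.
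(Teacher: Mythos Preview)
Your cycle (i)$\Rightarrow$(ii)$\Rightarrow$(iii)$\Rightarrow$(iv)$\Rightarrow$(i) matches the paper's, and your treatments of (i)$\Rightarrow$(ii) and (ii)$\Rightarrow$(iii) are essentially identical to the paper's.

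The step (iii)$\Rightarrow$(iv) has a genuine gap. You set $u = x \wedge f(x)^{\perp}$ and then assert that ``self-adjointness of $f$ and $f(a)=a$ \emph{should force} $f(u)=u$''. But $u\le x\le a$, so the claim $f(u)=u$ is an instance of exactly the conclusion (iv) you are trying to prove; invoking it here is circular. Your route can be salvaged: from (iii) one only has $f(u)\le u$; since $u\perp f(x)$, self-adjointness gives $f(u)\perp x$, so $f(u)\le u\le x$ and $f(u)\le x^{\perp}$, whence $f(u)=0$; then $f(u)\perp 1$ yields $u\perp f(1)=a$, and together with $u\le a$ this gives $u=0$, after which orthomodularity finishes. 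The paper takes a different and cleaner path that you did not find: it applies the hypothesis of (iii) to \emph{both} $x$ and $x^{\perp_a}$, then uses join-preservation and $f(a)=a$ to compute
\[
a=f(a)=f(x\vee x^{\perp_a})=f(x)\vee f(x^{\perp_a})\le f(x)\vee x^{\perp_a}\le a,
\]
so $f(x)\vee x^{\perp_a}=a$, i.e.\ $f(x)^{\perp_a}\wedge x=0$ in $\downset a$, and orthomodularity in $\downset a$ gives $f(x)=x$. The symmetric use of $x^{\perp_a}$ is the missing idea.

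For (iv)$\Rightarrow$(i) your direct argument also stumbles: the inequality $\pi_a(y)\le y$ you invoke for the lower half of the sandwich is false in general (it holds precisely when $y$ commutes with $a$). What \emph{is} true is $\pi_a(y)\le a^{\perp}\vee y$, which gives $\pi_a(y)=f(\pi_a(y))\le f(a^{\perp}\vee y)=f(a^{\perp})\vee f(y)=f(y)$; combined with your correct upper bound $f(y)\le f(\pi_a(y)\vee a^{\perp})=\pi_a(y)$ (via $y\le a^{\perp}\vee y=\pi_a(y)\vee a^{\perp}$) this closes the sandwich. The paper bypasses all of this by identifying (iv) with conditions (1)--(3) on \cite[page~144]{Kalmbach83} and citing the uniqueness statement there.
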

\begin{proof} Ad (i)$\Rightarrow$ (ii): This is exactly Corollary \ref{Sasself}.

 Ad (ii)$\Rightarrow$ (iii): Let $x\in\downset a$. Then $x=f(u)$  for some $u\in X$. Since $f$ 
 is idempotent we conclude $x=f(u)=f(f(u))=f(x)$ . 

 Ad (iii)$\Rightarrow$ (iv): Let $x\in\downset a$. Then $f(x)\leq x$ 
 and $f(x^{\perp_a})\leq x^{\perp_a}$. From 
 Lemma \ref{DownsetLem} we know that $\downset a$ is an orthomodular lattice 
 and $x\vee  x^{\perp_a}=a$. Therefore also $a=f(a)=f(x\vee  x^{\perp_a})=%
 f(x)\vee  f(x^{\perp_a})\leq  f(x)\vee  x^{\perp_a}\leq a$. We conclude that 
 $f(x)^{\perp_a}\wedge  x=0$. Since $\downset a$ is an orthomodular lattice 
 we obtain $f(x)=x$.

 Ad (iv)$\Rightarrow$ (i): Assume (iv). The condition $x\in\downset a$ implies $f(x)=x$ is exactly a reformulated condition (1) 
 in \cite[page 144]{Kalmbach83}, 
 condition (2) in \cite[Page 144]{Kalmbach83} means exactly that $f$ is self-adjoint linear. Moreover condition (3) \cite[page 144]{Kalmbach83}  follows from condition (2) using Lemma \ref{lem:lattice-adjoint}. Since any map $f$  
 satisfying (1), (2) and (3) is identical to 
 $\pi_a$ (see \cite[pages 144--145]{Kalmbach83}) we finished the proof.
\end{proof}

\medskip 
Lemma \ref{cores} establishes an elegant relationship between restrictions and corestrictions of linear maps between orthomodular lattices, focusing specifically on how these operations interact with the adjoint operation. 
\medskip 

\begin{lemma}\label{cores}
Let $f$ be a linear map between complete orthomodular lattices $X$ and $Y$, $y\in Y$ and $f(1)\leq y$. Then 
the corestriction $f|^{\downset y} \colon X \to \downset y$ of $f$ and the restriction 
$f^{*}|_{\downset y} \colon {\downset y} \to X$ of $f^{*}$ are linear maps such that 
$(f|^{\downset y})^{*}=f^{*}|_{\downset y}$.
\end{lemma}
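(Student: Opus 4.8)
The plan is to prove the single assertion that $k := f^{*}|_{\downset y}$ is an adjoint, in the sense of Definition~\ref{def:SupOMLatLin}, of the corestriction $g := f|^{\downset y} \colon X \to \downset y$, where $\downset y$ carries its own orthocomplement $\perp_y$ as in Lemma~\ref{DownsetLem}. Everything in the statement then follows formally: adjointness is a symmetric relation, so $g$ and $k$ are both linear maps, and since a linear map has at most one adjoint (the remark following Lemma~\ref{lem:lattice-adjoint}), we obtain $(f|^{\downset y})^{*} = f^{*}|_{\downset y}$.

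First I would check that $g$ and $k$ are well defined as functions. For $k$ this is immediate, as it is just the set-theoretic restriction of $f^{*}\colon Y \to X$ to the subset $\downset y \subseteq Y$. For $g$ I would invoke that linear maps are order-preserving (\cite[Lemma 5]{BLP}); equivalently, $f$ preserves joins by Lemma~\ref{lem:lattice-adjoint}(iii), so $f(1) = f(x \disjun x^{\perp}) = f(x) \disjun f(x^{\perp}) \geq f(x)$. In either case $f(x) \leq f(1) \leq y$ for every $x \in X$, so $f(x) \in \downset y$ and the corestriction is legitimate.

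The heart of the argument is the verification of the adjointness condition: for all $x \in X$ and $v \in \downset y$,
\[
g(x) \perp_y v \quad\Longleftrightarrow\quad x \perp k(v),
\]
where $\perp_y$ denotes orthogonality computed inside $\downset y$ and $\perp$ on the right is orthogonality in $X$. Unfolding the left-hand side, $g(x) \perp_y v$ means $g(x) \leq v^{\perp_y} = y \conjun v^{\perp}$. Since $g(x) = f(x) \leq f(1) \leq y$ holds unconditionally, the inequality $g(x) \leq y \conjun v^{\perp}$ is equivalent to $g(x) \leq v^{\perp}$, i.e. to $f(x) \perp v$ in $Y$. As $f$ is linear with adjoint $f^{*}$ and $v \in \downset y \subseteq Y$, this is in turn equivalent to $x \perp f^{*}(v) = k(v)$ in $X$. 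Chaining the equivalences yields the displayed biconditional, which is exactly the statement that $k$ is an adjoint of $g$.

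I do not expect a genuine obstacle here. The only thing requiring care is the bookkeeping of the three relations in play — orthogonality in $X$, in $Y$, and the twisted orthocomplement $\perp_y$ on $\downset y$ — together with the single substantive observation that the hypothesis $f(1) \leq y$ forces the whole image of $f$ to lie below $y$, which is precisely what makes the extra meet with $y$ in the formula for $v^{\perp_y}$ inert.
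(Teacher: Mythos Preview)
Your proof is correct and follows essentially the same route as the paper: both reduce the claim to the single adjointness verification $g(x)\perp_y v \Leftrightarrow x\perp k(v)$, and both carry out the chain of equivalences by unfolding $v^{\perp_y}=y\conjun v^{\perp}$ and using $f(x)\leq y$ to drop the meet with $y$. Your version is more explicit about why well-definedness of the corestriction holds and why this one verification settles all three conclusions, but the argument is the same.
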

\begin{proof} Evidently, both maps are correctly defined. Let us show that $(f|^{\downset y})^{*}=f^{*}|_{\downset y}$.
 Let $x \in X$ and $u\in \downset y$.  
 We compute: 
\begin{align*}
f|^{\downset y}(x) \perp_y u&\text{ if and only if } f(x)\leq u^{\perp_y}=y\wedge u^{\perp} 
\text{ if and only if } f(x)\leq u^{\perp}\\
&\text{ if and only if } f^{*}(u)\leq x^{\perp} \text{ if and only if } f^{*}|_{\downset y}(u) \leq x^{\perp}\\
&\text{ if and only if } x\perp f^{*}|_{\downset y}(u).
\end{align*}
\end{proof}

\medskip

Similarly as in \cite[Lemma 3.4]{Jac} we have the following lemma. 
This lemma reveals a fundamental connection between down-sets in orthomodular lattices and dagger monomorphisms, showing how the Sasaki projection naturally arises as the composition of a certain embedding and its adjoint. 

\medskip

\begin{lemma} \label{DownsetLemma}
Let $X$ be a complete orthomodular lattice, with element $a\in X$. 
There is a dagger monomorphism $\downset a \rightarrowtail X$ in
  \Cat{SupOMLatLin}, for which we also write $a$, with
$$\begin{array}{rclcrcl}
a(u) & = & u
& \quad\mbox{and}\quad &
a^{*}(x) & = & \pi_a(x).
\end{array}$$
Moreover, $a\circ a^{*}=\pi_a$.
\end{lemma}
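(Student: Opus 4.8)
The plan is to realize the claimed arrow as the inclusion $\downset a \hookrightarrow X$ and then check the three assertions in order: that this inclusion is a morphism of $\Cat{SupOMLatLin}$ with adjoint the (corestricted) Sasaki projection, that it is a dagger monomorphism, and that post-composing it with its adjoint returns $\pi_a$.

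\medskip

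First I would set up $a \colon \downset a \to X$, $a(u) = u$, and produce its adjoint. The cleanest route uses Lemma~\ref{cores}: by Corollary~\ref{Sasself} the map $\pi_a \colon X \to X$ is self-adjoint, and $\pi_a(1) = a \wedge (a^{\perp} \vee 1) = a \leq a$, so Lemma~\ref{cores} applies with $f = \pi_a$ and $y = a$. It tells us that the corestriction $\pi_a|^{\downset a}\colon X \to \downset a$ (well-defined since $\image \pi_a = \downset a$) and the restriction $\pi_a^{*}|_{\downset a} = \pi_a|_{\downset a}\colon \downset a \to X$ are linear maps with $(\pi_a|^{\downset a})^{*} = \pi_a|_{\downset a}$. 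But for $u \in \downset a$ we have $\pi_a(u) = u$ by Lemma~\ref{lem:Sasaki projection facts}(a), so $\pi_a|_{\downset a} = a$. Hence $a$ is a linear map and $a^{*} = \pi_a|^{\downset a}$, i.e. $a^{*}(x) = \pi_a(x)$. (A direct verification is also possible: for $u \in \downset a$ and $x \in X$, unwinding the orthocomplement $\perp_a$ on $\downset a$ from Lemma~\ref{DownsetLem} gives $u \perp_a \pi_a(x) \iff u \leq a \wedge \pi_a(x)^{\perp} \iff u \perp \pi_a(x)$, and by Lemma~\ref{lem:Sasaki projection facts}(a),(d) this is equivalent to $\pi_a(u) \perp x$, that is $u \perp x$, that is $a(u) \perp x$.)

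\medskip

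Next, the dagger monomorphism identity $a^{*} \circ a = \idmap[\downset a]$ is immediate: for $u \in \downset a$, $(a^{*} \circ a)(u) = \pi_a(u) = u$ by Lemma~\ref{lem:Sasaki projection facts}(a). Finally, since $a$ is literally the inclusion, for every $x \in X$ we get $(a \circ a^{*})(x) = a(\pi_a(x)) = \pi_a(x)$, so $a \circ a^{*} = \pi_a$.

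\medskip

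The only point requiring any care is the first step: keeping the orthocomplement $\perp$ of $X$ separate from the inherited orthocomplement $\perp_a$ of $\downset a$, and checking that $\pi_a$ genuinely lands in $\downset a$ so that the corestriction (and the appeal to Lemma~\ref{cores}) is legitimate. Once Corollary~\ref{Sasself} and Lemma~\ref{lem:Sasaki projection facts} are available, there is no real obstacle.
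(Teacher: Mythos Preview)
Your proof is correct. Your primary route---applying Lemma~\ref{cores} to the self-adjoint map $\pi_a$ and then identifying the restriction $\pi_a|_{\downset a}$ with the inclusion $a$---is a mild repackaging of the paper's argument: the paper instead verifies the adjunction condition $a(u)\perp x \iff u\perp_a \pi_a(x)$ directly from Lemma~\ref{lem:Sasaki projection facts}(a),(d), which is precisely the computation you sketch in your parenthetical. The two approaches cover the same ground; yours front-loads the work into the already-established Lemma~\ref{cores} and Corollary~\ref{Sasself}, while the paper's keeps the argument self-contained at the level of the Sasaki projection identities. The remaining checks ($a^*\circ a=\idmap[\downset a]$ and $a\circ a^*=\pi_a$) are identical in both.
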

\begin{proof} Let $x \in X$ and $u\in \downset a$.  

We compute: 
\begin{align*}
    a(u) \perp x &\text{ if and only if } \pi_a(u) = u \perp x  \text{ if and only if }  
    u \perp \pi_a(x)\\
    &\text{ if and only if } u \leq \pi_a(x)^{\perp} %
    \text{ if and only if } u \leq \pi_a(x)^{\perp} \wedge a= \pi_a(x)^{\perp_a}.
\end{align*}
The first equivalence follows from Lemma \ref{lem:Sasaki projection facts}(a), 
the second one from Lemma \ref{lem:Sasaki projection facts}(d). The remaining equivalences are evident. 

From Lemma \ref{lem:Sasaki projection facts}(a) we also obtain that the 
 map $a\colon \downset a\rightarrow X$ is a dagger monomorphism since:
 \begin{align*}
a^{*}(a(u)) = \pi_a(u)=u.
 \end{align*}
 The statement $a\circ a^{*}=\pi_a$ is a consequence of Lemma \ref{lem:Sasaki projection facts}(a) and the property that $\pi_a$ is idempotent.
\end{proof}

\medskip

The following lemma provides a beautiful characterization of the kernel of a morphism between complete orthomodular lattices, expressing it elegantly in terms of a down-set of an orthogonal element. What makes this result particularly interesting is that it not only identifies the kernel's structure but also confirms that this kernel inherits the complete orthomodular lattice structure from its parent complete orthomodular lattice. 

\medskip

\begin{lemma} \label{KernelLemma}
Let $f \colon X \to Y$ be a morphism of complete orthomodular lattices. Then 
$\kernel f=\downset f^{*}(1)^{\perp}$ is a complete orthomodular lattice.
\end{lemma}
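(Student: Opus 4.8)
The plan is to prove the two assertions in turn: first that $\kernel f = \downset f^{*}(1)^{\perp}$ as sets, and second that this down-set carries the complete orthomodular lattice structure. The second part is immediate from Lemma~\ref{DownsetLem}: once we identify $\kernel f$ with a principal down-set $\downset a$ (here $a = f^{*}(1)^{\perp}$) inside the complete orthomodular lattice $X$, Lemma~\ref{DownsetLem} gives it an orthomodular lattice structure with meets and joins computed in $X$, and since $X$ is complete and $\downset a$ is closed under arbitrary joins (the join of any family below $a$ is again below $a$), $\downset a$ is complete. So the real content is the set equality.

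For the set equality I would argue by a chain of equivalences. Fix $x \in X$. The key observation is that $f(x) = 0$ if and only if $f(x) \perp 1$ (since $0$ is the unique element orthogonal to $1$, as $z \perp 1$ means $z \leq 1^{\perp} = 0$). Now apply the defining adjointness property of the linear map $f$ with its unique adjoint $f^{*}$: $f(x) \perp 1$ if and only if $x \perp f^{*}(1)$. Finally, $x \perp f^{*}(1)$ means $x \leq f^{*}(1)^{\perp}$, i.e. $x \in \downset f^{*}(1)^{\perp}$. Chaining these gives $x \in \kernel f \iff x \in \downset f^{*}(1)^{\perp}$, which is the desired equality.

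I do not anticipate a serious obstacle here; the argument is a routine unwinding of definitions. The one point that deserves a word of care is the step $f(x) = 0 \iff f(x) \perp 1$: this uses that in any ortholattice $0$ is the bottom element and $z \perp 1 \iff z \leq 1^{\perp} = 0 \iff z = 0$, which follows directly from Definition~\ref{OMLatDef}. It is also worth noting explicitly that we are using the uniqueness of the adjoint (remarked just after Lemma~\ref{lem:lattice-adjoint}) so that ``$f^{*}$'' is well-defined, and that $f^{*}$ is itself a linear map so the adjointness biconditional applies in the form $f(x) \perp y \iff x \perp f^{*}(y)$ with $y = 1$. Everything else — completeness and orthomodularity of the down-set — is handed to us by Lemma~\ref{DownsetLem}.
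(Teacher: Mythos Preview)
Your proposal is correct and follows essentially the same approach as the paper: the paper's proof is exactly your chain of equivalences $x\in\kernel f \iff f(x)=0 \iff f(x)\perp 1 \iff x\perp f^{*}(1) \iff x\leq f^{*}(1)^{\perp}$, and it leaves the ``complete orthomodular lattice'' part implicit (as an immediate consequence of Lemma~\ref{DownsetLem}), which you spell out. Your additional remarks justifying $f(x)=0\iff f(x)\perp 1$ and the completeness of $\downset a$ are fine and only make the argument more self-contained.
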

\begin{proof} Let $x \in X$. We compute: 
\begin{align*}
x\in \kernel f&\text{ if and only if } f(x)=0  \text{ if and only if } 
f(x)\perp 1\\ 
&\text{ if and only if } x\perp f^{*}(1) \text{ if and only if } 
x\leq f^{*}(1)^{\perp}.
\end{align*}
\end{proof}

\medskip
Corollary \ref{SSasself} reveals two important properties of self-adjoint morphisms on complete orthomodular lattices. The first part extends our understanding of kernels by showing how they relate to down-sets of orthocomplements when the morphism is self-adjoint, while the second part establishes a subtle order-theoretic inequality involving double orthocomplements. These properties are particularly significant as they demonstrate how self-adjointness interacts with the orthomodular structure.
\medskip

\begin{corollary}\label{SSasself}
     Let $X$ be a complete orthomodular lattice, and let 
     $f \colon X \to X$ be a self-adjoint morphism of complete   orthomodular lattices. Then 
$\kernel f=\downset f(1)^{\perp}$ and 
$f(f(y^\perp)^\perp))\leq y$ for all $y\in X$.
 \end{corollary}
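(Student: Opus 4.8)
The plan is to derive both claims directly from the general results already proved for arbitrary morphisms, specializing to the self-adjoint case $f^{*}=f$. The corollary is essentially Lemma~\ref{KernelLemma} and Lemma~\ref{lem:Sasaki projection facts}(b) with the adjoint replaced by $f$ itself.

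First I would handle the kernel identity. Lemma~\ref{KernelLemma} states that for any morphism $f\colon X\to Y$ one has $\kernel f=\downset f^{*}(1)^{\perp}$. Since $f\colon X\to X$ is self-adjoint, $f^{*}=f$, hence $f^{*}(1)=f(1)$ and therefore $\kernel f=\downset f^{*}(1)^{\perp}=\downset f(1)^{\perp}$; moreover the same lemma guarantees this down-set is a complete orthomodular lattice. That disposes of the first assertion with no real work.

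For the inequality $f(f(y^{\perp})^{\perp})\leq y$, the natural route is to compare $f$ with the Sasaki projection $\pi_a$ for $a=f(1)$, because Lemma~\ref{lem:Sasaki projection facts}(b) gives exactly $\pi_a(\pi_a(y^{\perp})^{\perp})=a\wedge y\leq y$. However, a self-adjoint linear $f$ need not equal $\pi_{f(1)}$ (Proposition~\ref{charsasa} requires the extra idempotency/fixed-point condition), so I cannot simply invoke (b) for $f$. Instead I would argue directly: set $a=f(1)$, note $f(y^{\perp})\leq a$ and hence $f(y^{\perp})^{\perp}\geq a^{\perp}$, and then use self-adjointness to compute $f(f(y^{\perp})^{\perp})\perp z$ iff $f(y^{\perp})^{\perp}\perp f(z)$, i.e. $f(z)\leq f(y^{\perp})$; taking $z=(f(f(y^{\perp})^{\perp}))$ and chasing the orthogonality through the adjunction, one reduces to showing $f(f(y^{\perp})^{\perp})\perp y^{\perp}$, which follows since $f(f(y^{\perp})^{\perp})\perp z$ whenever $f(z)\leq f(y^{\perp})$ and in particular for $z=y^{\perp}$. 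Alternatively, and more cleanly, I would factor $f$ through $\downset a$: by Lemma~\ref{cores} the corestriction $g=f|^{\downset a}\colon X\to\downset a$ has adjoint $g^{*}=f^{*}|_{\downset a}=f|_{\downset a}$, and writing $a\colon\downset a\rightarrowtail X$ for the dagger mono of Lemma~\ref{DownsetLemma} we get $f=a\circ g$ and $f^{*}=g^{*}\circ a^{*}=g^{*}\circ\pi_a$; combining $f=a\circ g$ with self-adjointness $g^{*}\circ\pi_a=f=a\circ g$ and then invoking Lemma~\ref{lem:Sasaki projection facts}(b) for $\pi_a$ yields the bound after a short orthogonality computation.

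The main obstacle is the second inequality: the temptation to quote Lemma~\ref{lem:Sasaki projection facts}(b) for $f$ directly is exactly the trap, since $f$ is only self-adjoint linear, not a Sasaki projection, so the argument must genuinely use the adjunction $f(x)\perp z\iff x\perp f(z)$ together with $f(1)=a$ to push everything below $a$ and then exploit the orthomodular structure of $\downset a$. I expect the cleanest writeup to mirror the computation in the proof of Lemma~\ref{DownsetLemma}, threading the equivalence $f(x)\perp z\iff x\perp f(z)$ through the chain $a^{\perp}\leq f(y^{\perp})^{\perp}$ to conclude $f(f(y^{\perp})^{\perp})\leq f(1)\wedge y\leq y$ — indeed the sharper statement $f(f(y^{\perp})^{\perp})\leq f(1)\wedge y$ should drop out, paralleling $\pi_a(\pi_a(y^{\perp})^{\perp})=a\wedge y$.
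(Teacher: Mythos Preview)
Your proposal is correct, and the key step of your ``direct'' argument is in fact exactly the paper's proof --- but you do not seem to recognise that this step is already the entire argument. The paper's proof of the inequality is two lines:
\[
f\bigl(f(y^{\perp})^{\perp}\bigr)\leq y
\;\Longleftrightarrow\;
f\bigl(f(y^{\perp})^{\perp}\bigr)\perp y^{\perp}
\;\Longleftrightarrow\;
f(y^{\perp})^{\perp}\perp f(y^{\perp}),
\]
the second equivalence being the self-adjointness relation $f(x)\perp z\iff x\perp f(z)$ with $x=f(y^{\perp})^{\perp}$ and $z=y^{\perp}$; the right-hand side is trivially true. This is precisely what you obtain when you specialise your own equivalence ``$f(f(y^{\perp})^{\perp})\perp z$ iff $f(z)\leq f(y^{\perp})$'' to $z=y^{\perp}$.

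Everything else in your proposal for the inequality --- the comparison with $\pi_{f(1)}$, the factorisation through $\downset a$ via Lemma~\ref{cores} and Lemma~\ref{DownsetLemma}, the appeal to orthomodularity of $\downset a$, the stray clause ``taking $z=f(f(y^{\perp})^{\perp})$'' --- is unnecessary scaffolding. No Sasaki projection, no down-set structure, and no orthomodularity beyond the definition of $\perp$ is used. Your final paragraph frames the inequality as ``the main obstacle''; in fact it is immediate from the adjunction and is the lighter of the two claims, not the heavier one. (Your sharper observation $f(f(y^{\perp})^{\perp})\leq f(1)\wedge y$ is correct and follows just by noting $f(\,\cdot\,)\leq f(1)$, but the paper does not state it.)
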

  \begin{proof} It is enough to check that 
 $f(f(y^\perp)^\perp))\leq y$. We compute: 
 \begin{align*} 
 f(f(y^\perp)^\perp))\leq y&%
  \text{ if and only if }  
  f(f(y^\perp)^\perp))\perp y^\perp \\
   &\text{ if and only if }   
   f(y^\perp)^\perp\perp f(y^\perp).
 \end{align*}
 \end{proof}

 \medskip

 We show that \Cat{SupOMLatLin} has a {\it zero object} $\nul$; this means that there is, for any complete 
orthomodular lattice $X$, a unique morphism $\nul \to X$ and hence 
also a unique morphism $X \to \nul$. 

The zero object $\nul$ will be 
one-element orthomodular lattice $\{0\}$.  Let us show that $\nul$ is indeed an 
initial object in $\Cat{SupOMLatLin}$. Let $X$ be an arbitrary complete  orthomodular
lattice. The only function $f \colon \nul \to X$ is $f(0)=0$. 
Since we may identify $\nul$ with $\downset 0$ we have that $f$ is is a dagger monomorphism and 
it has an adjoint $f^* \colon X\to \nul$ defined by $f^*(x)=\pi_0(x)=0$.

For objects $X$ and $Y$, we denote by $0_{X,Y}=X\rightarrow \nul \rightarrow Y$ the morphism uniquely factoring through $\nul$.

\medskip

The following definition introduces the concepts of kernels, cokernels, and their dagger counterparts, and lays the groundwork for understanding and working with various algebraic structures  within the framework of category theory. The concepts of a zero–mono morphism and  of 
a zero–epi morphism  characterize morphisms based on their interaction with the zero object and the zero morphism. They provide a way to distinguish between "injective-like" and "surjective-like" morphisms in the categorical context.

\medskip

\begin{definition}\label{weakkernel}
\begin{enumerate}
\item[]
\item For a morphism $f \colon A \to B$ 
in arbitrary category with zero morphisms, we say that a morphism  
$k\colon K \to A$ is a {\em  kernel} of $f$ if 
$fk=0_{K,B}$, and if $m\colon M\to A$ satisfies $fm=0_{M,B}$  then  there is a unique morphism 
$u \colon M \to K$ such that $ku = m$. 

We sometimes write $\kernel f$ for $k$ or $K$.

\[
\begin{tikzcd}[row sep=1cm, column sep=.75cm, ampersand replacement=\&]
	A\arrow[rr, "f"] \& \& B\\
	\& K\arrow[ul, "k"']\arrow[ur, "0_{K,B}"']\\
	\& M\arrow[uul, bend left, "m"]\arrow[u,dashed,"\exists!u"]\arrow[uur, bend right, "0_{M,B}"']
\end{tikzcd}
\]

\item A {\em cokernel} in a category with zero morphisms is a kernel in the opposite category.

 \item   For a morphism $f \colon A \to B$ 
in arbitrary dagger category with zero morphisms, we say that a morphism  
$k\colon  K \to A$ is a {\em weak dagger kernel} of $f$ if 
$fk=0_{K,B}$, and if $m\colon M\to A$ satisfies $fm=0_{M,B}$  then $kk^{*}m=m$.

\[
\begin{tikzcd}[row sep=1cm, column sep=.75cm, ampersand replacement=\&]
	A\arrow[rr, "f"] \arrow[dr, shift right, swap, "k^{*}"]\& \& B\\
	\& K\arrow[ul,shift right, "k"']\arrow[ur, "0_{K,B}"']\\
	\& M\arrow[uul, bend left, "m"]\arrow[u,dashed,"k^{*}m"]\arrow[uur, bend right, "0_{M,B}"']
\end{tikzcd}
\]

A {\em weak dagger kernel category} is a dagger category with zero morphisms where every morphism has a weak dagger kernel.

\item A {\em dagger kernel category} is a dagger category with a zero object, hence zero morphisms, where each morphism $f$ has a weak dagger kernel $k$ (called {\em dagger kernel}) 
that additionally satisfies $k^{*}k=1_K$.

\[
\begin{tikzcd}[row sep=1cm, column sep=.75cm, ampersand replacement=\&]
\&	A\arrow[rr, "f"] \arrow[dr, shift right, swap, "k^{*}"]\& \& B\\
K\arrow[ur, swap,"k"']\arrow[rr,dashed, "1_K\phantom{xxxxxxx}"]\&	\& K\arrow[ul,shift right, "k"']\arrow[ur, "0_{K,B}"']\\
\&	\& M\arrow[uul, bend left, "m", pos=0.35, labels=below left]\arrow[u,dashed, swap,"k^{*}m"]\arrow[uur, bend right, "0_{M,B}"']
\end{tikzcd}
\]

\item A {\em (weak) dagger cokernel} in a dagger category with zero morphisms is a (weak) dagger  kernel in the opposite category.

\item  A morphism $f \colon A \to B$   in arbitrary dagger 
kernel category with a zero object  is said to be {\em zero–epi} 
if, for $g \colon B \to C$,  
$gf = 0$ implies $g = 0$. A morphism $f$ is said to be {\em zero–mono} if $f^{*}$ is zero–epi.
\end{enumerate}
\end{definition}

\medskip

By Lemma \ref{KernelLemma}, the kernel of a morphism $f \colon X \to Y$ in $\Cat{SupOMLatLin}$, as defined for orthomodular lattices, coincides with the kernel of f as defined in Definition \ref{weakkernel}.

Furthermore, every dagger kernel is inherently a kernel. In the context of $\Cat{SupOMLatLin}$, we will demonstrate that dagger kernels always exist and are, in fact, equivalent to kernels.

Theorem \ref{OMLatLinDagKerCatThm} hence establishes a fundamental property of the category $\Cat{SupOMLatLin}$, demonstrating its crucial structure as a dagger kernel category.

\medskip

\begin{theorem}
\label{OMLatLinDagKerCatThm}
  The category $\Cat{SupOMLatLin}$ is a dagger kernel category. The
  dagger kernel of a morphism $f \colon X \to Y$ is 
  $k \colon\!  \downset k \to X$, where $k = f^*(1)^{\perp} \in X$, like in
  Lemma~\ref{KernelLemma}.
\end{theorem}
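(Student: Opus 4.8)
The plan is to verify the two defining conditions of a dagger kernel category for $\Cat{SupOMLatLin}$: first that it has a zero object (already done in the preceding discussion, where $\nul = \{0\}$ was shown to be a zero object), and second that every morphism $f\colon X \to Y$ has a dagger kernel, namely a weak dagger kernel $k$ satisfying in addition $k^{*}k = \idmap$. The natural candidate is the dagger monomorphism $k\colon \downset k \rightarrowtail X$ of Lemma~\ref{DownsetLemma}, where $k = f^{*}(1)^{\perp}$, so that $k(u) = u$, $k^{*}(x) = \pi_k(x)$, and $k \circ k^{*} = \pi_k$.

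First I would check $f k = 0_{\downset k, Y}$, i.e. $f(u) = 0$ for every $u \in \downset k$. This is immediate from Lemma~\ref{KernelLemma}: that lemma shows $\kernel f = \downset f^{*}(1)^{\perp} = \downset k$, and every element of $\kernel f$ maps to $0$ under $f$ by definition. Next I would verify the universal/weak property: given $m\colon M \to X$ with $fm = 0_{M,Y}$, I must show $k k^{*} m = m$. Since $fm = 0$, for each $w \in M$ we have $f(m(w)) = 0$, hence $m(w) \in \kernel f = \downset k$ by Lemma~\ref{KernelLemma}, so $m(w) \le k$. Then $k(k^{*}(m(w))) = \pi_k(m(w)) = m(w)$ by Lemma~\ref{lem:Sasaki projection facts}(a), since $m(w) \le k$. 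Thus $k k^{*} m = m$ as required. Finally, the extra dagger-kernel condition $k^{*} k = \idmap_{\downset k}$ is exactly the dagger-monomorphism property established in Lemma~\ref{DownsetLemma}: $k^{*}(k(u)) = \pi_k(u) = u$ for all $u \in \downset k$, again by Lemma~\ref{lem:Sasaki projection facts}(a).

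One routine point worth spelling out is that $m$ is genuinely a morphism (a linear map) into $\downset k$: here I would invoke Lemma~\ref{cores}, which shows that the corestriction of a linear map to a principal downset containing its image is again linear, so the factorization $u\colon M \to \downset k$ through which $m$ factors is a morphism of $\Cat{SupOMLatLin}$ and not merely a function; and its uniqueness follows because $k$ is monic (being a dagger mono), with $u = k^{*}m$. Assembling these observations, together with the already-established fact that $\nul$ is a zero object, completes the verification that $\Cat{SupOMLatLin}$ is a dagger kernel category.

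I do not anticipate a serious obstacle here: the entire argument is essentially bookkeeping on top of Lemmas~\ref{DownsetLemma}, \ref{KernelLemma}, \ref{cores}, and the Sasaki-projection facts of Lemma~\ref{lem:Sasaki projection facts}. The one place to be careful is the interplay between the orthocomplement $\perp$ on $X$ and the relative orthocomplement $\perp_k$ on $\downset k$ (from Lemma~\ref{DownsetLem}), to make sure that $k^{*} = \pi_k$ really is the adjoint of the inclusion with respect to the correct orthogonality relations on the two lattices — but this is precisely what Lemma~\ref{DownsetLemma} already records, so the proof can simply cite it.
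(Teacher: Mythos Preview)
Your proposal is correct and follows essentially the same line as the paper's proof: both verify $fk = 0$ via Lemma~\ref{KernelLemma}, then $kk^{*}m = m$ using that $m(w) \le k$ forces $\pi_k(m(w)) = m(w)$, and finally $k^{*}k = \idmap$ from Lemma~\ref{DownsetLemma}. The only cosmetic differences are that you cite Lemma~\ref{lem:Sasaki projection facts}(a) directly where the paper expands the Sasaki projection by hand, and your closing paragraph invoking Lemma~\ref{cores} is superfluous---$k^{*}m$ is already a morphism as a composite of morphisms, so no separate argument is needed to show the factoring map lives in $\Cat{SupOMLatLin}$.
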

\begin{proof} Since $\downset k=\kernel f$ 
the composition $f \after k$ is the zero map $0_{\kernel f,Y}=\downset k \rightarrow Y$. 

Assume now that $m\colon M\to X$ satisfies $fm=0_{M,Y}$ and $z\in M$. Then 
$m(z)\in \kernel f$, i.e., $m(z)\leq {f^*(1)}^{\perp}$ and $m(z)$ commutes with ${f^*(1)}^{\perp}$.

We compute:
\begin{align*}
kk^{*}m(z)&= \pi_{f^*(1)^{\perp}}(m(z))=f^*(1)^{\perp} \wedge (f^*(1) \disjun m(z))\\
&=f^*(1)^{\perp} \wedge  m(z)=m(z).
\end{align*}

From Lemma \ref{DownsetLemma} we know that $k \colon\!  \downset k \to X$ is 
a dagger monomorphism, i.e., $k^{*}k=1_{\kernel f}$.
\end{proof}
\section{Factorization in \Cat{SupOMLatLin} }\label{sec:factor}

In this section, we will demonstrate that every morphism in \Cat{SupOMLatLin} admits an essentially unique factorization as a zero-epi followed by a kernel.

Following the approach in \cite{Jac}, we provide explicit descriptions of cokernels and zero-epis. It is important to note that in a dagger category equipped with dagger kernels, dagger cokernels also exist. These cokernels can be chosen to be dagger epi and are defined as $\coker(f) = \ker(f^{*})^{*}$.

\medskip

\begin{lemma}
\label{CokerLem}
The cokernel of a map $f\colon X\rightarrow Y$ in \Cat{SupOMLatLin} is
$$\coker(f) = \pi_{f(1)^{\perp}}|^{\downset f(1)^{\perp}}\colon Y \to  \downset f(1)^{\perp}.$$

\noindent Then:
\begin{align*}
\mbox{$f$ is zero-epi}
\;\;&\smash{\stackrel{\textrm{def}}{\Longleftrightarrow}}\;\;
\coker(f) = 0
\;\Longleftrightarrow\;
f(1)=1,\\
\mbox{$f$ is zero-mono}
\;\;&\smash{\stackrel{\textrm{def}}{\Longleftrightarrow}}\;\;
\coker(f^{*}) = 0
\;\Longleftrightarrow\;
f^{*}(1)=1.
\end{align*}
\end{lemma}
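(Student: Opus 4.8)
The plan is to establish the three claims of Lemma~\ref{CokerLem} in turn: first the explicit formula for $\coker(f)$, then the two chains of equivalences characterizing zero-epi and zero-mono morphisms.

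\textbf{Step 1: Identifying the cokernel.} As noted in the text preceding the lemma, in a dagger kernel category one may take $\coker(f) = \kernel(f^{*})^{*}$. By Theorem~\ref{OMLatLinDagKerCatThm} applied to $f^{*}\colon Y \to X$, the dagger kernel of $f^{*}$ is the dagger monomorphism $k\colon \downset k \to Y$ with $k = (f^{*})^{*}(1)^{\perp} = f(1)^{\perp} \in Y$; explicitly, by Lemma~\ref{DownsetLemma}, this is the inclusion $u \mapsto u$ with adjoint $x \mapsto \pi_{f(1)^{\perp}}(x)$. Hence $\coker(f) = k^{*}$ is precisely the map $\pi_{f(1)^{\perp}}|^{\downset f(1)^{\perp}}\colon Y \to \downset f(1)^{\perp}$ (the corestriction is legitimate since $\image \pi_{f(1)^{\perp}} = \downset f(1)^{\perp}$ by Corollary~\ref{Sasself}), which is the claimed formula.

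\textbf{Step 2: Zero-epi characterization.} The first biconditional $f \text{ zero-epi} \Leftrightarrow \coker(f) = 0$ is the standard categorical fact that $\coker(f) = 0$ iff every $g$ with $gf = 0$ is itself $0$ — one direction is immediate from the universal property of the cokernel (if $\coker(f)=0$ and $gf=0$, then $g$ factors through the zero object), and the converse uses that $\coker(f)$ itself satisfies $\coker(f)\circ f = 0$, forcing $\coker(f)=0$ when $f$ is zero-epi. For the second equivalence $\coker(f) = 0 \Leftrightarrow f(1) = 1$: the codomain $\downset f(1)^{\perp}$ is the zero object $\nul = \{0\}$ exactly when $f(1)^{\perp} = 0$, i.e.\ $f(1) = 1$; and when the codomain is a nontrivial lattice the map $\pi_{f(1)^{\perp}}|^{\downset f(1)^{\perp}}$ is not the zero morphism (it is the identity on $\downset f(1)^{\perp}$ by Lemma~\ref{lem:Sasaki projection facts}(a)). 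So I would argue: if $f(1)=1$ then $\coker(f)$ has the zero object as codomain, hence is $0$; conversely if $\coker(f) = 0$ then since $\coker(f)$ restricted to $\downset f(1)^{\perp}$ is the identity, we get $1_{\downset f(1)^{\perp}} = 0$, forcing $\downset f(1)^{\perp} = \nul$ and $f(1)=1$.

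\textbf{Step 3: Zero-mono characterization.} This follows immediately by dualizing: $f$ is zero-mono iff $f^{*}$ is zero-epi (Definition~\ref{weakkernel}), which by Step~2 applied to $f^{*}$ happens iff $\coker(f^{*}) = 0$ iff $f^{*}(1) = 1$. I do not expect a genuine obstacle here; the only mild care needed is in Step~1, making sure the dagger-kernel-of-$f^{*}$ computation lines up with the stated corestriction (in particular that $(f^{*})^{*} = f$, which holds since the dagger is involutive), and in Step~2, handling the degenerate case where $\downset f(1)^{\perp}$ collapses to the zero object cleanly rather than hand-waving.
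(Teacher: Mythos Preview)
Your proposal is correct and follows essentially the same approach as the paper: both compute $\coker(f) = \ker(f^{*})^{*}$ via Theorem~\ref{OMLatLinDagKerCatThm} and Lemma~\ref{DownsetLemma}, arriving at the corestricted Sasaki projection. The only difference is that the paper dispatches the zero-epi/zero-mono equivalences by citing \cite[Lemma~4]{HeJa} together with the explicit cokernel formula, whereas you spell out the argument directly (using that $\coker(f)$ restricts to the identity on $\downset f(1)^{\perp}$); your version is a bit more self-contained but not a genuinely different route.
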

\begin{proof} It follows immediately from the preceding 
remark, Lemma \ref{DownsetLemma} and Theorem 
\ref{OMLatLinDagKerCatThm}. Namely, 
\begin{align*}
\coker(f)= \ker(f^{*})^{*}=(f(1)^{\perp})^{*}=%
\pi_{f(1)^{\perp}}|^{\downset f(1)^{\perp}}.
\end{align*}
The remaining part follows from \cite[Lemma 4]{HeJa} and 
the the description of a cokernel.
\end{proof}

\begin{definition}\cite{HeJa}
    For a morphism $f\colon X \to Y$ in a dagger kernel category \Cat{D}, its {\em image} is defined as $\ker(\coker(f))$. This results in:
\begin{enumerate}[label=(\roman*), ref=(\roman*)]
\item A representative morphism $i_f$ (which can be chosen to be a dagger mono).
\item An object $\Im(f)$ (the domain of $i_f$).
\end{enumerate}

Both $i_f$ and $\Im(f)$ are called the {\em image} of $f$, and they are unique up to isomorphism of the domain.
\end{definition}

\medskip

\begin{remark}\label{rem:factor}\rm 
Explicitly, the image and an essentially unique zero-epi and kernel factorization 
of $f$ can be obtained as follows \cite{HeJa}: 

\begin{enumerate}
\item Take the kernel $k$ of $f^{\adj}$: 

\begin{tikzcd}
\ker(f^{\adj}) \arrow[r, hook, "k"] & Y \arrow[r, "f^{\adj}"] & X
\end{tikzcd}
\item Then define $i_f$ as the kernel of $k^{\adj}$, as illustrated in the following diagram:
\begin{equation}
  \label{ImageEqn}
\begin{tikzcd}
& \Im(f) = \ker(k^{\adj})\arrow[r, hook, "i_f"] & Y \arrow[r, twoheadrightarrow, "k^{\adj}"] & \ker(f^{\adj}) \\
& X \arrow[u, "e_f", dashed] \arrow[ur, "f"'] & & &
\end{tikzcd}
\end{equation}
\item The uniquely determined morphism $e_f\colon X\to \Im f$ is zero-epi 
and $e_f=(i_f)^{\adj}\circ f$. 
\item This zero-epi and kernel factorization $f = i_{f} \after e_{f}$ from~(\ref{ImageEqn}) is unique up-to isomorphism.
\item Let $m_f=e_f\after i_{f^{\adj}}$. Then we can factorise $f$ as: \begin{equation}
\label{ImageCoimageEqn}
\begin{tikzcd}[column sep=large]
X \arrow[rr, two heads, "\textrm{coimage}"', %
"{(i_{f^{\adj}})^{\adj}}"] %
\arrow[rrrrr, "f"', bend left=30] 
& & \Im(f^{\adj}) \arrow[r, two heads, tail, "m_f", "{\begin{array}{c}\textrm{zero-epi} \\ \textrm{zero-mono}\end{array}}"']  
& \Im(f) \arrow[rr, hook, "i_f", "\textrm{image}"']
& & Y
\end{tikzcd}
\end{equation}
\end{enumerate}
\end{remark}

We briefly examine how the factorization from Remark~\ref{rem:factor} applies specifically to the category \Cat{SupOMLatLin}. Note that these results are analogous to those for Hilbert spaces \cite[Example 3]{HeJa}.

\medskip

\begin{proposition}
\label{ImFacLem}
For a map $f\colon X\rightarrow Y$ in \Cat{SupOMLatLin} one has:

\medskip

\begin{center}
\begin{tikzcd}[row sep=small, column sep=small]
\Im(f) = \downset f(1) \arrow[r, hook] 
& Y&\text{with} 
& i_{f}=f(1),\\
X \arrow[r, "e_f", two heads] & \downset f(1)& \text{is} 
& e_f=f|^{\downset f(1)},\\
\Im(f^{\adj}) \arrow[r, two heads, tail, "m_f"]&\Im(f^{\adj})& \text{is} 
& m_f=f|^{\downset f(1)}_{{\downset {f^{\adj}}(1)}}.\\
\end{tikzcd}
\end{center}
\end{proposition}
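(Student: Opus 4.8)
The plan is to instantiate the general description of the image/coimage factorization from Remark~\ref{rem:factor} in the concrete category $\Cat{SupOMLatLin}$, using the explicit form of kernels (Theorem~\ref{OMLatLinDagKerCatThm}), cokernels (Lemma~\ref{CokerLem}), the description of the embedding $a\colon\downset a\rightarrowtail X$ and its adjoint $\pi_a$ (Lemma~\ref{DownsetLemma}), and the restriction/corestriction compatibility with the dagger (Lemma~\ref{cores}). The three claimed equalities are really three separate computations, carried out in the order: first $i_f$ and $\Im(f)$, then $e_f$, then $m_f$.

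First I would identify the image. By the definition of image, $\Im(f)=\ker(\coker(f))$. By Lemma~\ref{CokerLem}, $\coker(f)=\pi_{f(1)^{\perp}}|^{\downset f(1)^{\perp}}\colon Y\to\downset f(1)^{\perp}$, a self-adjoint-type map whose value at $1$ is $f(1)^{\perp}$ (after the trivial identification of $\downset f(1)^\perp$ with a downset of $Y$, its adjoint acts as $\pi_{f(1)^\perp}$). Applying Theorem~\ref{OMLatLinDagKerCatThm} to $g=\coker(f)$ gives $\ker(g)=\downset g^{*}(1)^{\perp}$; since $g^{*}(1)=\pi_{f(1)^\perp}(1)=f(1)^{\perp}$, we get $g^{*}(1)^{\perp}=f(1)$, hence $\Im(f)=\downset f(1)$ with the representative dagger mono $i_f$ being precisely the embedding written $f(1)$ in Lemma~\ref{DownsetLemma}. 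Alternatively, and perhaps more cleanly matching Remark~\ref{rem:factor}, I would compute $\ker(f^\adj)=\downset f(1)^{\perp}$ directly from Theorem~\ref{OMLatLinDagKerCatThm} (with $f$ replaced by $f^\adj$, so $k=(f^\adj)^*(1)^\perp=f(1)^\perp$), then take $i_f=\ker(k^\adj)$; since $k=f(1)^\perp$ is the embedding of $\downset f(1)^\perp$ with adjoint $\pi_{f(1)^\perp}$, Theorem~\ref{OMLatLinDagKerCatThm} again yields $\ker(k^\adj)=\downset(\pi_{f(1)^\perp}(1))^\perp=\downset f(1)$, with $i_f=f(1)$.

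Next, for $e_f$: Remark~\ref{rem:factor}(3) tells us $e_f=(i_f)^{\adj}\circ f$. Since $i_f=f(1)$ is the embedding $\downset f(1)\rightarrowtail X$ (with codomain $Y$), its adjoint is $(i_f)^\adj=\pi_{f(1)}|^{\downset f(1)}\colon Y\to\downset f(1)$ by Lemma~\ref{DownsetLemma}. Composing with $f$ and using that $f(x)\leq f(1)$ for all $x$ (so $\pi_{f(1)}(f(x))=f(x)$ by Lemma~\ref{lem:Sasaki projection facts}(a)) gives $e_f(x)=\pi_{f(1)}(f(x))=f(x)$, i.e.\ $e_f=f|^{\downset f(1)}$, the corestriction. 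One should also note $e_f$ is zero-epi because $e_f(1)=f(1)$ is the top of $\downset f(1)$, matching Lemma~\ref{CokerLem}. Finally, for $m_f$: by Remark~\ref{rem:factor}(5), $m_f=e_f\circ i_{f^\adj}$, where $i_{f^\adj}$ is the image embedding of $f^\adj$, namely (by the first step applied to $f^\adj$) the embedding $\downset f^\adj(1)\rightarrowtail X$. Composing the embedding $\downset f^\adj(1)\hookrightarrow X$ with $e_f=f|^{\downset f(1)}$ yields the map $\downset f^\adj(1)\to\downset f(1)$ sending $u\mapsto f(u)$, which is exactly the double restriction/corestriction $f|^{\downset f(1)}_{\downset f^\adj(1)}$.

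The main obstacle is bookkeeping rather than conceptual depth: one must be careful about the silent identifications of downsets $\downset a\subseteq X$ with objects of $\Cat{SupOMLatLin}$ equipped with their own orthocomplement $\perp_a$ (Lemma~\ref{DownsetLem}), and about the fact that a kernel or image object is only determined up to dagger isomorphism, so the asserted equalities are equalities of the canonical representatives chosen as in Lemma~\ref{DownsetLemma}. The one genuine computation to get right is the adjoint of $\coker(f)$ (equivalently, recognizing $k=f(1)^\perp$ as the downset embedding so that $k^\adj=\pi_{f(1)^\perp}$ and $k^\adj(1)=f(1)^\perp$), from which everything else follows by the repeated application of Theorem~\ref{OMLatLinDagKerCatThm} and Lemma~\ref{DownsetLemma}.
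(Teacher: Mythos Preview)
Your proposal is correct and follows essentially the same route as the paper: compute $\Im(f)=\ker(\coker(f))$ via Lemma~\ref{CokerLem} and Theorem~\ref{OMLatLinDagKerCatThm} to obtain $\downset f(1)$ with $i_f=f(1)$, then read off $e_f=(i_f)^{\adj}\circ f=f|^{\downset f(1)}$ and $m_f=e_f\circ i_{f^{\adj}}=f|^{\downset f(1)}_{\downset f^{\adj}(1)}$ from Remark~\ref{rem:factor}. One small caution: the adjoint of $\coker(f)=\pi_{f(1)^{\perp}}|^{\downset f(1)^{\perp}}$ is the inclusion $\downset f(1)^{\perp}\hookrightarrow Y$, not $\pi_{f(1)^{\perp}}$ itself, though your alternative computation via $\ker(f^{\adj})$ and then $\ker(k^{\adj})$ avoids this slip and is the cleaner path.
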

\begin{proof} This can be verified by a direct application of the definitions. We compute:
\begin{align*}
    \Im(f) &= \ker(\coker(f)) = \ker(\pi_{f(1)^{\perp}}|^{\downset f(1)^{\perp}})=%
    \left((\pi_{f(1)^{\perp}}|^{\downset f(1)^{\perp}})^{*}(f(1)^{\perp})%
    \right)^{\perp}\\
    &=f(1)^{\perp\perp}=f(1).
\end{align*}
  Since   $e_f=(i_f)^{\adj}\circ f$ and $m_f=e_f\after i_{f^{\adj}}$ we conclude that 
  $e_f=f|^{\downset f(1)}$ and 
  $m_f=f|^{\downset f(1)}_{{\downset {f^{\adj}}(1)}}$. 
\end{proof}

As a result,  in 
\Cat{SupOMLatLin}, factorization (\ref{ImageEqn}) rewrites as: 

\begin{equation}
  \label{OMLImageEqn}
\begin{tikzcd}
& \downset f(1)\arrow[r, hook, "f(1)"] & Y \\
& X \arrow[u, "f|^{\downset f(1)}", dashed] \arrow[ur, "f"'] & 
\end{tikzcd}
\end{equation}

and factorization (\ref{ImageCoimageEqn}) rewrites as: 

\begin{equation}
\label{OMLImageCoimageEqn}
\begin{tikzcd}[column sep=large]
X \arrow[rr, two heads, "\textrm{dagger epi}"', %
"{({f^{\adj}}(1))^{\adj}}"] %
\arrow[rrrrr, "f"', bend left=30] 
& & \downset {f^{\adj}}(1)\arrow[r, two heads, tail, 
"f|^{\downset f(1)}_{{\downset {f^{\adj}}(1)}}", "{\begin{array}{c}\textrm{zero-epi} \\ \textrm{zero-mono}\end{array}}"']  
& \downset f(1) \arrow[rr, hook, "f(1)", "\textrm{dagger mono}"']
& & Y
\end{tikzcd}
\end{equation}

\begin{corollary}\label{cor:imrIm}
   Let $f\colon X\rightarrow Y$ in \Cat{SupOMLatLin} satisfy 
   $\image(f) = \downset f(1)$. 
   Then the unique factorization (\ref{ImageEqn}) rewrites as: 

\begin{equation}
  \label{corOMLImageEqn}
\begin{tikzcd}
& \downset f(1)\arrow[r, hook, "id_{Y}|_{\downset f(1)}"] & Y \\
& X \arrow[u, "f|^{\downset f(1)}", dashed] \arrow[ur, "f"'] & 
\end{tikzcd}
\end{equation}
\end{corollary}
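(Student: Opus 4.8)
\textbf{Proof plan for Corollary~\ref{cor:imrIm}.}

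The plan is to specialize the general factorization~(\ref{ImageEqn}), as rewritten for \Cat{SupOMLatLin} in~(\ref{OMLImageEqn}) via Proposition~\ref{ImFacLem}, using the extra hypothesis $\image(f) = \downset f(1)$. From Proposition~\ref{ImFacLem} we already know $\Im(f) = \downset f(1)$, that $i_f$ is the dagger monomorphism $f(1)\colon \downset f(1) \rightarrowtail Y$ from Lemma~\ref{DownsetLemma}, and that $e_f = f|^{\downset f(1)}$. So the only thing that changes in diagram~(\ref{corOMLImageEqn}) relative to~(\ref{OMLImageEqn}) is the \emph{description} of the monomorphism $i_f$: I must show that under the hypothesis $\image(f) = \downset f(1)$, the map $f(1)\colon \downset f(1)\to Y$ coincides with $\mathrm{id}_Y|_{\downset f(1)}$, the literal inclusion of the down-set into $Y$ viewed as a restriction of the identity.

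The key step is therefore to unwind both maps as honest functions on elements. By Lemma~\ref{DownsetLemma}, the morphism $f(1)\colon \downset f(1)\to X$ (here with codomain $Y$ since $f(1)\in Y$) acts by $u\mapsto u$; that is, it already \emph{is} the inclusion on underlying sets. So in fact $f(1) = \mathrm{id}_Y|_{\downset f(1)}$ holds unconditionally as a function, and the content of the corollary is that rewriting it this way is legitimate and that the factorization~(\ref{OMLImageEqn}) is the asserted~(\ref{corOMLImageEqn}). The hypothesis $\image(f)=\downset f(1)$ is what guarantees the corestriction $e_f = f|^{\downset f(1)}$ is genuinely surjective onto its stated codomain (i.e.\ is the coimage/zero-epi with the expected image), so that the triangle commutes with $f = f(1)\circ f|^{\downset f(1)} = \mathrm{id}_Y|_{\downset f(1)}\circ f|^{\downset f(1)}$ holding on the nose and the factorization being the one from Remark~\ref{rem:factor}. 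Commutativity of the triangle is immediate: for $x\in X$, $(\mathrm{id}_Y|_{\downset f(1)}\circ f|^{\downset f(1)})(x) = f(x)$.

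I expect the main (and only) obstacle to be purely notational bookkeeping: making precise that ``$f(1)$'' as a named dagger monomorphism in Lemma~\ref{DownsetLemma} and ``$\mathrm{id}_Y|_{\downset f(1)}$'' denote the same arrow $\downset f(1)\rightarrowtail Y$ of \Cat{SupOMLatLin}, and checking that the adjoint side also matches — namely that $(\mathrm{id}_Y|_{\downset f(1)})^{*} = \pi_{f(1)}|^{\downset f(1)}$, again directly from Lemma~\ref{DownsetLemma} and Lemma~\ref{cores} with $y = f(1)$. Once that identification is in place, uniqueness up to isomorphism of the factorization is inherited verbatim from Remark~\ref{rem:factor}(4), and nothing further is needed.
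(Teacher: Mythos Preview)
Your plan is correct and matches the paper's implicit argument: the corollary is immediate from Proposition~\ref{ImFacLem} together with the observation (from Lemma~\ref{DownsetLemma}) that the dagger monomorphism named $f(1)\colon \downset f(1)\rightarrowtail Y$ already acts by $u\mapsto u$, hence literally equals $\mathrm{id}_Y|_{\downset f(1)}$. As you yourself note, this identification holds unconditionally, so the hypothesis $\image(f)=\downset f(1)$ is not needed for the triangle to commute or for the factorization to be the one from~(\ref{OMLImageEqn}); it only upgrades $e_f$ from zero-epi to honestly surjective, which the diagram does not assert.
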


\begin{corollary}\label{cor:SasimrIm}
   Let $\pi_a\colon X\rightarrow X$ be a Sasaki projection. 
   Then the unique factorization (\ref{ImageEqn}) rewrites as: 

\begin{equation}
  \label{OMLImageEqn2}
\begin{tikzcd}
& \downset a\arrow[r, hook, "\pi_{a}|_{\downset a}"] & X \\
& X \arrow[u, "\pi_{a}|^{\downset a}", dashed] \arrow[ur, "\pi_a"'] & 
\end{tikzcd}
\end{equation}
Moreover, $\pi_{a}|^{\downset a}$ is dagger epi, $\pi_{a}|_{\downset a}$ is dagger mono, 
$\pi_{a}|^{\downset a}=\left(\pi_{a}|_{\downset a}\right)^{*}$, and 
$\pi_{a}|^{\downset a}\circ \pi_{a}|_{\downset a}=id_{\downset a}$.
\end{corollary}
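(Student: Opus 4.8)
The plan is to derive this corollary from two results already in hand: Corollary~\ref{Sasself}, which gives $\pi_a(1)=a$ and $\image\pi_a=\downset a$, and Lemma~\ref{DownsetLemma}, whose dagger monomorphism $a\colon \downset a\rightarrowtail X$ (with $a(u)=u$ and $a^{*}(x)=\pi_a(x)$) will turn out to be exactly the two legs of the displayed factorization.

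First I would check that $\pi_a$ meets the hypothesis of Corollary~\ref{cor:imrIm}. Indeed $\pi_a(1)=a\conjun(a^{\perp}\disjun 1)=a$, and by Corollary~\ref{Sasself} we have $\image\pi_a=\downset a=\downset\pi_a(1)$. So Corollary~\ref{cor:imrIm} applies with $f=\pi_a$ and $Y=X$, and it rewrites factorization~(\ref{ImageEqn}) as~(\ref{corOMLImageEqn}): the kernel leg is the inclusion $\idmap[X]|_{\downset a}\colon \downset a\hookrightarrow X$ and the zero-epi leg is the corestriction $\pi_a|^{\downset a}\colon X\to\downset a$. The only remaining point needed to match diagram~(\ref{OMLImageEqn2}) is that $\idmap[X]|_{\downset a}$ coincides with $\pi_a|_{\downset a}$; this is immediate from Lemma~\ref{lem:Sasaki projection facts}(a), since $u\leq a$ forces $\pi_a(u)=u$, so $\pi_a$ restricted to $\downset a$ literally is the inclusion $\downset a\hookrightarrow X$.

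For the four ``moreover'' assertions I would simply read off Lemma~\ref{DownsetLemma} (or, equivalently, Lemma~\ref{cores} with $y=a$). Under the identifications above, $\pi_a|_{\downset a}$ is the map denoted $a$ there and is a dagger monomorphism, so $(\pi_a|_{\downset a})^{*}=a^{*}=\pi_a|^{\downset a}$ and $\pi_a|^{\downset a}\after\pi_a|_{\downset a}=a^{*}\after a=\idmap[\downset a]$ (which one can also verify pointwise: $\pi_a(\pi_a(u))=\pi_a(u)=u$ for $u\leq a$). Finally, since the dagger is involutive, $(\pi_a|^{\downset a})^{*}=\pi_a|_{\downset a}$ is a dagger monomorphism, which is precisely what it means for $\pi_a|^{\downset a}$ to be a dagger epimorphism. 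I do not expect a real obstacle: every computation is a one-liner, and the only thing requiring care is the bookkeeping that identifies the abstract maps of~(\ref{ImageEqn}), the concrete restriction/corestriction of $\pi_a$, and the dagger mono $a$ of Lemma~\ref{DownsetLemma} with one another.
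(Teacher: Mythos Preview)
Your proposal is correct and follows precisely the route the paper intends: the corollary is stated without proof because it is the immediate specialization of Proposition~\ref{ImFacLem} (via Corollary~\ref{cor:imrIm}) to $f=\pi_a$, together with the identification of the inclusion $\downset a\hookrightarrow X$ with $\pi_a|_{\downset a}$ from Lemma~\ref{lem:Sasaki projection facts}(a) and the dagger-mono data of Lemma~\ref{DownsetLemma}. Your treatment of the ``moreover'' clauses via Lemma~\ref{DownsetLemma} (or equivalently Lemma~\ref{cores}) is exactly the expected bookkeeping.
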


\section{Biproducts and free objects in \Cat{SupOMLatLin} }\label{sec:biproducts}

The category $\Cat{SupOMLatLin}$ of complete orthomodular lattices with linear maps exhibits a rich algebraic structure. One key aspect of this richness is the presence of arbitrary dagger biproducts. 
The dagger structure, essential for capturing the involutive nature of the category, endows the biproduct with a self-duality.

By a {\it dagger biproduct} of  objects $A, B$ in a dagger 
 category $\C$ with a zero object, we mean a coproduct $\begin{tikzcd} A \arrow[r, "\iota_A"] & A \oplus B & B \arrow[l, "\iota_B"'] \end{tikzcd}$ such that $\iota_A, \iota_B$ are dagger monomorphisms and ${\iota_B}^\star \circ \iota_A = 0_{A,B}$. The dagger biproduct of an arbitrary set of objects is defined in the expected way.

\medskip

 \begin{proposition}
\label{BiprodProp}
The category $\Cat{SupOMLatLin}$ has arbitrary dagger biproducts $\bigoplus$.
Explicitly, $\bigoplus_{i\in I} X_i$ is the  cartesian product of  orthomodular lattices $X_i$, $i\in I $, $I$ an index set.

The coprojections $\kappa_j\colon X_j
\to \bigoplus_{i\in I} X_i$  are defined by 
\begin{align*}
  (\kappa_j)(x) = x_{j=}&\quad\text{ with } \quad
x_{j=}(i)=\begin{cases}
	x&\text{if}\ \ i=j; \\
	0&\text{otherwise.}
\end{cases}
\end{align*}
and
$(\kappa_j)^*((x_i)_{i\in I}) = x_j$. The dual product structure is given by
$p_j = (\kappa_j)^*$.
\end{proposition}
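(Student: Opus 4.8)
The plan is to recognise $\bigoplus_{i\in I}X_i$ as the ordinary cartesian product $\prod_{i\in I}X_i$ with all structure computed coordinatewise, and then to verify, in turn, that it is an object of $\Cat{SupOMLatLin}$, that the stated coprojections have the claimed adjoint behaviour, and that the universal property of a coproduct holds. First I would check that $\prod_{i\in I}X_i$, with order, meets and joins computed coordinatewise and with orthocomplement $((x_i)_{i\in I})^{\perp}=(x_i^{\perp})_{i\in I}$, is a complete orthomodular lattice: arbitrary meets and joins exist because each $X_i$ is complete, the ortholattice axioms of Definition \ref{OMLatDef} hold coordinate by coordinate, and the orthomodular law $x\leq y\Rightarrow y=x\disjun(x^{\perp}\conjun y)$ likewise reduces to the same law in each $X_i$. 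Hence $\bigoplus_{i\in I}X_i$ is a legitimate object of $\Cat{SupOMLatLin}$.

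Next I would treat the dagger data. Writing $p_j$ for the map sending $(y_i)_{i\in I}$ to $y_j$, the key computation is
\[
\kappa_j(x)\perp (y_i)_{i\in I}\iff \kappa_j(x)\leq (y_i^{\perp})_{i\in I}\iff x\leq y_j^{\perp}\iff x\perp p_j((y_i)_{i\in I}),
\]
where the middle equivalence uses that order and orthocomplement are coordinatewise and that $0\leq y_i^{\perp}$ holds automatically in the coordinates $i\neq j$. By Definition \ref{def:SupOMLatLin} this shows $\kappa_j$ is a linear map with adjoint $p_j$, so $(\kappa_j)^{*}=p_j$, and in particular all the $\kappa_j$ and $p_j$ are morphisms of $\Cat{SupOMLatLin}$. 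A direct coordinate computation then yields $p_j\circ\kappa_j=\idmap[X_j]$, so each $\kappa_j$ is a dagger monomorphism, and $p_l\circ\kappa_j=0_{X_j,X_l}$ for $l\neq j$, which is exactly the orthogonality condition $(\kappa_l)^{*}\circ\kappa_j=0_{X_j,X_l}$ required of a dagger biproduct.

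It remains to establish that $\bigl(\bigoplus_{i\in I}X_i,(\kappa_j)_{j\in I}\bigr)$ is a coproduct in $\Cat{SupOMLatLin}$. Given linear maps $g_i\colon X_i\to M$ I would define $g\colon \bigoplus_{i\in I}X_i\to M$ by $g((x_i)_{i\in I})=\bigvee_{i\in I}g_i(x_i)$, which is legitimate since $M$ is complete. As each $g_i$ preserves arbitrary joins (Lemma \ref{lem:lattice-adjoint}) and joins in $\bigoplus_{i\in I}X_i$ are coordinatewise, interchanging the two families of joins shows that $g$ preserves arbitrary joins, hence is linear again by Lemma \ref{lem:lattice-adjoint}; and $g\circ\kappa_j=g_j$ by a one-line coordinate check. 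For uniqueness I would exploit the identity $(x_i)_{i\in I}=\bigvee_{i\in I}\kappa_i(x_i)$, valid coordinatewise, so that any linear $g'$ with $g'\circ\kappa_i=g_i$ for all $i$ satisfies $g'((x_i)_{i\in I})=\bigvee_{i\in I}g'(\kappa_i(x_i))=\bigvee_{i\in I}g_i(x_i)=g((x_i)_{i\in I})$. Combined with the previous paragraph this makes $\bigoplus_{i\in I}X_i$ a dagger biproduct; the dual product structure with projections $p_j=(\kappa_j)^{*}$ is then the automatic dual in a dagger category, or may be checked directly by noting that $m\mapsto (g_i(m))_{i\in I}$ is the mediating map for a family $g_i\colon M\to X_i$.

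I expect the only genuinely delicate point to be the infinite case of the coproduct universal property: namely that the copairing $(x_i)_{i\in I}\mapsto\bigvee_{i\in I}g_i(x_i)$ is well defined and join-preserving, and that the uniqueness argument survives passage to an infinite index set. Everything else is coordinatewise bookkeeping, and the identification of ``linear map'' with ``join-preserving map'' used repeatedly throughout is exactly Lemma \ref{lem:lattice-adjoint}.
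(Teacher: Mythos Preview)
Your proof is correct and follows essentially the same route as the paper: verify the adjoint relation between $\kappa_j$ and $p_j$ via the orthogonality computation, check the dagger-mono and orthogonality conditions, and establish the coproduct universal property with copairing $(x_i)_{i\in I}\mapsto\bigvee_{i\in I}g_i(x_i)$ and uniqueness from $(x_i)_{i\in I}=\bigvee_{i\in I}\kappa_i(x_i)$. The only noteworthy difference is that, to show the copairing is a morphism, you invoke Lemma~\ref{lem:lattice-adjoint} and argue join-preservation directly, whereas the paper instead exhibits the adjoint $(\bigoplus_{i\in I}f_i)^{*}(y)=(f_i^{*}(y))_{i\in I}$ and verifies the defining orthogonality relation; both are perfectly valid, and your variant is arguably a touch slicker.
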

\begin{proof}
  Let us first verify that $\kappa_j$ is a well-defined morphism of
  $\Cat{SupOMLatLin}$. 
  
  We compute: 
  \begin{align*}
   \kappa_j(x)\perp (y_i)_{i\in I}&
   \text{ if and only if } 
   x\leq y_j^{\perp} 
   \text{ if and only if }  
   y_j \leq x^{\perp} \\
   &\text{ if and only if } 
   (\kappa_j)^*((y_i)_{i\in I})\perp x.
  \end{align*}

\noindent Also, $\kappa_j$ is a dagger 
monomorphism since:
$$(\kappa_{j})^{*}\big((\kappa_{j})(x)\big) 
= (\kappa_{j})^{*}\big(x_{j=}\big) 
= x.$$ 

\noindent For $i \neq j$, we show  that $(\kappa_j)^*\after \kappa_i$ is the zero morphism. Namely, 

\begin{align*}
\big((\kappa_{j})^{*} \after \kappa_{i}\big)_{*}(y)
& = (\kappa_{j})^{*}\big((\kappa_{i})(y)\big)= %
(\kappa_{j})^{*}\big(x_{i=}\big)=0. 
\end{align*}

It is evident that 
$(x_i)_{i\in I}\in \bigoplus_{i\in I} X_i$ 
if and only if 
$(x_i)_{i\in I}=\bigvee_{j\in I} (x_j)_{j=}$.

Let us show that $\bigoplus_{i\in I} X_i$ is indeed a coproduct. 

Suppose
that morphisms $f_i \colon X_i \to Y$ in $\Cat{SupOMLatLin}$ are given.  We then define the
map $\bigoplus_{i\in I} f_i \colon \bigoplus_{i\in I} X_i \to Y$ by
$(\bigoplus_{i\in I} f_i)\big((x_i)_{i\in I}\big) =%
\bigvee_{j\in I} f_j(x_j)$
and the map  
$(\bigoplus_{i\in I} f_i)^{*} \colon Y \to \bigoplus_{i\in I} X_i $ by
$(\bigoplus_{i\in I} f_i)^{*}(y) = (f_i^*(y))_{i\in I}$. We compute:

\begin{align*}
&(\bigoplus_{i\in I} f_i)\big((x_i)_{i\in I}\big) 
\perp y\ 
   \text{ if and only if } 
   \bigvee_{i\in I} f_i(x_i) \leq y^{\perp} 
    \\
    \text{ if and only if } &f_i(x_i) \leq y^{\perp} \text{ for all } 
    {i\in I}
    \text{ if and only if } 
    f_i(x_i) \perp y \text{ for all } 
    {i\in I}\\ 
    \text{ if and only if } &
    x_i\perp f_i^*(y)  \text{ for all } 
    {i\in I}
     \text{ if and only if } 
     f_i^*(y) \leq x_i^{\perp} \text{ for all } 
    {i\in I}\\ 
    \text{ if and only if }&  
    (\bigoplus_{i\in I} f_i)^{*}(y) \leq 
    (x_i^{\perp})_{i\in I}
    \text{ if and only if }  
    (\bigoplus_{i\in I} f_i)^{*}(y) \perp 
    (x_i)_{i\in I}.
\end{align*}

Also, 
\begin{align*}
\big((\bigoplus_{i\in I} f_i)\circ \kappa_{j}\big)(x)&=
(\bigoplus_{i\in I} f_i)(x_{j=})=0\vee f_j(x)=f_j(x).
\end{align*}

\noindent Moreover,
if $g\colon  \bigoplus_{i\in I} X_i \rightarrow Y$ also satisfies
$g\after\kappa_{i} = f_{i}$, then: 
\begin{align*}
g\big((x_i)_{i\in I}\big)&=
g(\bigvee_{j\in I} (x_j)_{j=})=%
\bigvee_{j\in I}g((x_j)_{j=})=%
\bigvee_{j\in I}f_j(x_j)=%
(\bigoplus_{i\in I} f_i)\big((x_i)_{i\in I}\big). 
\end{align*}
\end{proof}

\medskip

In the category $\Cat{SupOMLatLin}$ of complete orthomodular lattices with linear maps, a fundamental concept is that of a free object. Intuitively, a free object on a set $A$ represents the ''most general" orthomodular lattice that can be generated from the elements of $A$. Remarkably, this free object can be explicitly characterized as a complete powerset Boolean algebra $\powerset{A}$.

\medskip

\begin{proposition}
\label{FreeOMLatGalProp}
A free object on a  set $A$ in $\Cat{SupOMLatLin}$ is isomorphic to the complete powerset Boolean algebra $\powerset{A}$.
\end{proposition}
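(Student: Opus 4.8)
The plan is to exhibit $\powerset{A}$ together with a unit map $\eta\colon A\to\powerset{A}$ and verify the universal property: for every object $Y$ in $\Cat{SupOMLatLin}$ and every function $g\colon A\to Y$ (of underlying sets), there is a unique linear map $\overline{g}\colon\powerset{A}\to Y$ with $\overline{g}\circ\eta=g$. The obvious candidate for the unit is $\eta(a)=\{a\}$, and since a linear map preserves arbitrary joins (Lemma~\ref{lem:lattice-adjoint}(iii)) and every $S\subseteq A$ satisfies $S=\bigvee_{a\in S}\{a\}$ in $\powerset{A}$, the only possible definition of the extension is $\overline{g}(S)=\bigvee_{a\in S}g(a)$, with the convention $\overline{g}(\emptyset)=0$. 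This forces uniqueness immediately, so the real content is showing that this $\overline{g}$ is a \emph{morphism} of $\Cat{SupOMLatLin}$, i.e.\ that it has an adjoint.

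First I would observe that $\overline{g}$ is well defined because $Y$ is complete, and that it preserves arbitrary joins by associativity of suprema: $\overline{g}\bigl(\bigcup_i S_i\bigr)=\bigvee_{a\in\bigcup_i S_i}g(a)=\bigvee_i\bigvee_{a\in S_i}g(a)=\bigvee_i\overline{g}(S_i)$. By Lemma~\ref{lem:lattice-adjoint} (the equivalence of (iii) with (i)), preservation of arbitrary joins is exactly what is needed for $\overline{g}$ to be a linear map, so it automatically has a unique adjoint $\overline{g}\adj$; hence $\overline{g}$ is a genuine morphism in $\Cat{SupOMLatLin}$. For concreteness one may also exhibit the adjoint explicitly as $\overline{g}\adj(y)=\{a\in A : g(a)\perp y^{\perp}\}=\{a\in A: g(a)\le y\}$, or more transparently via the order-adjoint description, but invoking Lemma~\ref{lem:lattice-adjoint} avoids that computation. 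Then $\overline{g}(\eta(a))=\overline{g}(\{a\})=g(a)$, so the triangle commutes.

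It remains to note that $\powerset{A}$ is indeed an object of $\Cat{SupOMLatLin}$: it is a complete Boolean algebra, hence a fortiori a complete orthomodular lattice, with $\perp$ given by set-theoretic complement. Assembling these facts — $\powerset{A}$ is an object, $\eta$ is a function into its underlying set, every $g$ extends uniquely to a morphism $\overline{g}$ — yields precisely the statement that $\powerset{A}$ is the free object on $A$, and uniqueness of free objects up to isomorphism gives the ``isomorphic to'' phrasing.

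The main obstacle, and the only step that is not a one-line check, is confirming that the set-map extension $\overline{g}$ lands in $\Cat{SupOMLatLin}$ rather than merely in the category of complete lattices with join-preserving maps; this is handled cleanly by quoting Lemma~\ref{lem:lattice-adjoint}, which identifies join-preserving maps with linear maps, so in fact even this step collapses. A secondary point worth a sentence is the degenerate case $A=\emptyset$, where $\powerset{\emptyset}=\{\emptyset\}=\nul$ is the zero object and the universal property reduces to the fact, already established in Section~\ref{sec:dkccoml}, that $\nul$ is initial.
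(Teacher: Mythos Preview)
Your argument is correct and follows essentially the same route as the paper: same unit $a\mapsto\{a\}$, same extension $\overline{g}(S)=\bigvee_{a\in S}g(a)$, and the same uniqueness argument via join-preservation. The only difference is that the paper exhibits the dagger adjoint explicitly as $f^{*}(y)=\{a\in A: g(a)\not\perp y\}$ and checks the orthogonality condition by hand, whereas you verify that $\overline{g}$ preserves arbitrary joins and then invoke Lemma~\ref{lem:lattice-adjoint}; this is a harmless variation.

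One small slip worth flagging: the explicit formula you offer in passing, $\overline{g}\adj(y)=\{a\in A: g(a)\le y\}$, is the right \emph{order}-adjoint $\hat{h}$ of $\overline{g}$, not its dagger adjoint. By Lemma~\ref{lem:lattice-adjoint}(ii) the two are related via $\overline{g}\adj={}^{\perp}\circ\hat{h}\circ{}^{\perp}$, which gives $\overline{g}\adj(y)=\{a\in A: g(a)\not\perp y\}$, matching the paper. Since you correctly note that this computation is optional once Lemma~\ref{lem:lattice-adjoint} is invoked, the error is cosmetic.
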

\begin{proof}  The canonical injection 
$i\colon A \to \powerset{A}$ is given by 
$a\mapsto \{a\}$. Let $Y$ be a complete orthomodular lattice 
and $g\colon A\to Y$. We define 
$f\colon \powerset{A}\to Y$ by 
$f(Z)=\bigvee \{g(a)\mid a\in Z\}$ and 
$f^{*}\colon Y\to \powerset{A}$ by 
$f^{*}(y)=\{a\in A\mid g(a) \not\perp y\}$. 
Clearly, $f\after i = g$.

We compute: 
\begin{align*}
f(Z)\perp y\ &
   \text{ if and only if } g(a)\perp y\  
   \text{ for all } a\in Z \\
   &\text{ if and only if } a\not\in f^{*}(y) 
    \text{ for all } a\in Z \\
     &\text{ if and only if } f^{*}(y)\subseteq A\setminus Z\\
     &\text{ if and only if } Z\perp f^{*}(y).
\end{align*}

\noindent Moreover,
if $h\colon  \powerset{A} \rightarrow Y$ also satisfies
$h\after i = g$, then: 
\begin{align*}
h(Z)&=
\bigvee_{a\in Z} h(\{a\})=%
\bigvee_{a\in Z} g(a)=f(Z). 
\end{align*}
\end{proof}

\section{Conclusion}\label{ConclusionSec}

Our research demonstrates that, akin to the findings in \cite{Jac}, $\Cat{OMLatLin}$ constitutes a dagger kernel category. In a forthcoming paper, we plan to associate with each complete orthomodular lattice $X$ a quantale $\Cat{Lin}(X)$ comprising the endomorphisms of $X$, thereby introducing a natural fuzzy-theoretic perspective to the theory of complete orthomodular lattices. Our analysis has revealed that $\Cat{SupOMLatLin}$ exhibits the behavior of a quantaloid as introduced in \cite{rosenthal2}. We intend to demonstrate that $\Cat{SupOMLatLin}$ is indeed an involutive quantaloid.

\backmatter

\bmhead{Acknowledgements}

The first author acknowledges the support of project 23-09731L by the Czech Science Foundation (GA\v CR), entitled 
``Representations of algebraic semantics for substructural logics''.
The research of the last two authors was supported by the Austrian Science Fund (FWF), {[10.55776/PIN5424624]} and the Czech Science Foundation (GA\v CR) project 24-15100L, entitled ``Orthogonality and Symmetry''. The third author was supported by the project MUNI/A/1457/2023 by Masaryk University.

\bibliography{sn-bibliography}


\end{document}